\theoremstyle{definition}
\newtheorem{dfn}{Definition}[section]
\theoremstyle{plain}
\newtheorem{thm}{Theorem}[section]
\newtheorem{pro}{Proposition}[section]
\newtheorem{cor}{Corollary}[section]
\theoremstyle{definition}
\newtheorem{rem}{Remark}[section]
\newtheorem{exa}{Example}[section]
\newcommand{\N}{\mathbb{N}}
\newcommand{\R}{\mathbb{R}}
\newcommand{\E}{\mathbb{E}}
\renewcommand{\P}{\mathbb{P}}
\newcommand{\G}{\mathbf{G}}
\newcommand{\g}{\mathbf{g}}
\newcommand{\y}{\mathbf{y}}
\newcommand{\0}{\mathbf{0}}
\newcommand{\lla}{\langle\!\langle\!\langle}
\newcommand{\rra}{\rangle\!\rangle\!\rangle}
\newcommand{\Cov}{\mathbb{C}\mathrm{ov}}
\newcommand{\e}{\mathrm{e}}
\newcommand{\la}{\langle}
\newcommand{\ra}{\rangle}
\renewcommand{\H}{\mathcal{H}}
\newcommand{\1}{\mathbf{1}}
\renewcommand{\d}{\mathrm{d}}
\numberwithin{equation}{section}
\begin{document}
\title[Fredholm Representation]
{Stochastic Analysis of Gaussian Processes via Fredholm Representation}

\author[Sottinen]{Tommi Sottinen}
\address{Tommi Sottinen\\ Department of Mathematics and Statistics, University of Vaasa, P.O. Box 700, FIN-65101 Vaasa, FINLAND}
\email{tommi.sottinen@iki.fi}

\author[Viitasaari]{Lauri Viitasaari}
\address{Lauri Viitasaari\\ Department of Mathematics and System Analysis, Aalto University School of Science, Helsinki\\
P.O. Box 11100, FIN-00076 Aalto,  FINLAND}
\email{lauri.viitasaari@aalto.fi}

\begin{abstract}
We show that every separable Gaussian process with integrable variance function admits a Fredholm representation with respect to a Brownian motion. We extend the Fredholm representation to a transfer principle and develop stochastic analysis by using it. We show the convenience of the Fredholm representation by giving applications to equivalence in law, bridges, series expansions, stochastic differential equations and maximum likelihood estimations.
\end{abstract}

\date{\today}

\keywords{
Equivalence in law;
Gaussian processes;
It\^o formula;
Malliavin calculus;
representation of Gaussian processes;
series expansions;
stochastic analysis.
}

\subjclass[2010]{
Primary 60G15;
Secondary 
60H05,
60H07,
60H30.
}

\thanks{Lauri Viitasaari was partially funded by Emil Aaltonen Foundation.
Tommi Sottinen was partially funded by the Finnish Cultural Foundation (National Foundations' Professor Pool).} 

\maketitle

%%%%%%%%%%%%%%%%%%%%%%%%%%%%%%%%%%%%%%%%%%%%%%%%%%%%%%%%%%%%%%%%%%%%%%%%%%%%%%%
%%%%%%%%%%%%%%%%%%%%%%%%%%%%%%%%%%%%%%%%%%%%%%%%%%%%%%%%%%%%%%%%%%%%%%%%%%%%%%%
%%%%%%%%%%%%%%%%%%%%%%%%%%%%%%%%%%%%%%%%%%%%%%%%%%%%%%%%%%%%%%%%%%%%%%%%%%%%%%%
\section{Introduction}

The stochastic analysis of Gaussian processes that are not semimartingales is challenging. One way to overcome the challenge is to represent the Gaussian process under consideration, $X$ say, in terms of a Brownian motion and then develop a transfer principle so that that the stochastic analysis can be done in the ``Brownian level'' and then transfered back into the level of $X$. 

One of the most studied representation in terms of a Brownian motion is the so-called Volterra representation. A \emph{Gaussian Volterra process} is a process that can be represented as
\begin{equation}\label{eq:intro-volterra}
X_t = \int_0^t K(t,s)\, \d W_s, \qquad t\in [0,T],
\end{equation}
where $W$ is a Brownian motion and $K\in L^2([0,T]^2)$. Here the integration goes only upto $t$, hence the name ``Volterra''.  This Volterra nature is very convenient: it means that the filtration of $X$ is included in the filtration of the underlying Brownian motion $W$.
Gaussian Volterra processes and their stochastic analysis has been studied, e.g., in \cite{alos-mazet-nualart} and \cite{baudoin-nualart}, just to mention few.  Apparently, the most famous Gaussian process admitting Volterra representation is the fractional Brownian motion and its stochastic analysis indeed has been developed mostly by using its Volterra representation, see e.g. the monographs \cite{oksendal} and \cite{mishura} and references therein.

In discrete finite time the Volterra representation \eqref{eq:intro-volterra} is nothing but the Cholesky lower-triangular factorization of the covariance of $X$, and hence every Gaussian process is a Volterra process.  In continuous time this is not true, see Example \ref{exa:degenerate} in Section \ref{sect:fredholm}.  

There is a more general representation than \eqref{eq:intro-volterra} by Hida, see \cite[Theorem 4.1]{hida-hitsuda}. However, this Hida representation includes possibly infinite number of Brownian motions. Consequently, it seems very difficult to apply the Hida representation to build a transfer principle needed by stochastic analysis.  Moreover, the Hida representation is not quite general. Indeed, it requires, among other things, that the Gaussian process is purely non-deterministic.  The Fredholm representation \eqref{eq:intro-fredholm} below does not require pure non-determinism. Our Example \ref{exa:degenerate} in Section \ref{sect:fredholm}, that admits a Fredholm representation, does not admit a Hida representation, and the reason is the lack of pure non-determinism.

The problem with the Volterra representation \eqref{eq:intro-volterra} is the Volterra nature of the kernel $K$, as far as generality is concerned.  Indeed, if one considers Fredholm kernels, i.e., kernels where the integration is over the entire interval $[0,T]$ under consideration, one obtains generality.  A \emph{Gaussian Fredholm process} is a process that admits the Fredholm representation    
\begin{equation}\label{eq:intro-fredholm}
X_t = \int_0^T K_T(t,s)\,\d W_s, \qquad t\in [0,T],
\end{equation}
where $W$ is a Brownian motion and $K_T\in L^2([0,T]^2)$. In this paper we show that \emph{every} separable Gaussian process with integrable variance function admits the representation \eqref{eq:intro-fredholm}.  The price we have to pay for this generality is twofold:
\begin{enumerate}
\item 
The process $X$ is generated, in principle, from the entire path of the underlying Brownian motion $W$.  Consequently, $X$ and $W$ do not necessarily generate the same filtration.  This is unfortunate in many applications. 
\item
In general the kernel $K_T$ depends on $T$ even if the covariance $R$ does not, and consequently the derived operators also depend on $T$. This is why we use the cumbersome notation of explicitly stating out the dependence when there is one. In stochastic analysis this dependence on $T$ seems to be a minor inconvenience, however. Indeed, even in the Volterra case as examined, e.g., by Al\`os, Mazet and Nualart \cite{alos-mazet-nualart}, one cannot avoid the dependence on $T$ in the transfer principle. 
Of course, for statistics, where one would like to let $T$ tend to infinity, this is a major inconvenience.
\end{enumerate}

Let us note that the Fredholm representation has already been used, without proof, in \cite{azmoodeh-et-al}, where the H\"older continuity of Gaussian processes was studied.

\medskip

Let us mention a few papers that study stochastic analysis of Gaussian processes here. Indeed, several different approaches have been proposed in the literature. In particular, fractional Brownian motion has been a subject of active study (see the monographs \cite{oksendal} and \cite{mishura} and references therein). More general Gaussian processes have been studied in the already mentioned work by Al\`os, Mazet and Nualart \cite{alos-mazet-nualart}. They considered Gaussian Volterra processes where the kernel satisfies certain technical conditions. In particular, their results cover fractional Brownian motion with Hurst parameter $H>\frac14$. Later Cheridito and Nualart \cite{che-nua} introduced an approach based on the covariance function itself rather than to the Volterra kernel $K$. Kruk et al. \cite{kruk-et-al} developed stochastic calculus for processes having finite $2$-planar variation, especially covering fractional Brownian motion $H\geq\frac12$. Moreover, Kruk and Russo \cite{kruk-russo} extended the approach to cover singular covariances, hence covering fractional Brownian motion $H<\frac12$. Furthermore, Mocioalca and Viens \cite{mo-viens} studied processes which are close to processes with stationary increments. More precisely, their results cover cases where $\E(X_t - X_s)^2 \sim \gamma^2(|t-s|)$ where $\gamma$ satisfies some minimal regularity conditions. In particular, their results cover some processes which are not even continuous. Finally, the latest development we are aware of is a paper by Lei and Nualart \cite{lei-nualart} who developed stochastic calculus for processes having absolute continuous covariance by using extended domain of the divergence introduced in \cite{kruk-russo}. Finally, we would like to mention Lebovits \cite{lebovits} who used the S-transform approach and obtained similar results to ours, albeit his notion of integral is not elementary as ours.

The results presented in this paper gives unified approach to stochastic calculus for Gaussian processes and only integrability of the variance function is required. In particular, our results cover processes that are not continuous. 

\medskip

The paper is organized as follows: 

Section \ref{sect:preliminaries} contains some preliminaries on Gaussian processes and isonormal Gaussian processes and related Hilbert spaces.

Section \ref{sect:fredholm} provides the proof of the main theorem of the paper: the Fredholm representation. 

In Section \ref{sect:sa} we extend the Fredholm representation to a transfer principle in three contexts of growing generality:  First we prove the transfer principle for Wiener integrals in Subsection \ref{subs:wi}, then we use the transfer principle to define the multiple Wiener integral in Subsection \ref{subs:mwi}, and finally, in Subsection \ref{subs:mc} we prove the transfer principle for Malliavin calculus, thus showing that the definition of multiple Wiener integral via the transfer principle done in Subsection \ref{subs:mwi} is consistent with the classical definitions involving Brownian motion or other Gaussian martingales. Indeed, classically one defines the multiple Wiener integrals by either building an isometry with removed diagonals or by spanning higher chaoses by using the Hermite polynomials.  In the general Gaussian case one cannot of course remove the diagonals, but the Hermite polynomial approach is still valid.  We show that this approach is equivalent to the transfer principle. In Subsection \ref{subs:mc} we also prove an It\^o formula for general Gaussian processes and in Subsection \ref{subs:ed} we extend the It\^o formula even further by using the technique of extended domain in the spirit of \cite{che-nua}. This It\^o formula is, as far as we know, the most general version for Gaussian processes existing in the literature so far. 

Finally, in Section \ref{sect:appl} we show the power of the transfer principle in some applications. 
In Subsection \ref{subs:el} the transfer principle is applied to the question of equivalence of law of general Gaussian processes.
In subsection \ref{subs:bridges} we show how one can construct net canonical-type representation for generalized Gaussian bridges, i.e., for the Gaussian process that is conditioned by multiple linear functionals of its path. 
In Subsection \ref{subs:se} the transfer principle is used to provide series expansions for general Gaussian processes. 
    
%%%%%%%%%%%%%%%%%%%%%%%%%%%%%%%%%%%%%%%%%%%%%%%%%%%%%%%%%%%%%%%%%%%%%%%%%%%%%%%
%%%%%%%%%%%%%%%%%%%%%%%%%%%%%%%%%%%%%%%%%%%%%%%%%%%%%%%%%%%%%%%%%%%%%%%%%%%%%%%
\section{Preliminaries}\label{sect:preliminaries}

Our general setting is as follows:
Let $T>0$ be a fixed finite time-horizon and let $X=(X_t)_{t\in [0,T]}$ be a Gaussian process with covariance $R$ that may or may not depend on $T$. Without loss of any interesting generality we assume that $X$ is centered. We also make the very weak assumption that $X$ is \emph{separable} in the sense of following definition.

\begin{dfn}[Separability]\label{dfn:separable}
The Gaussian process $X$ is separable if the Hilbert space $L^2(\Omega,\sigma(X),\P)$ is separable.
\end{dfn}

\begin{exa}
If the covariance $R$ is continuous, then $X$ is separable.  In particular, all continuous Gaussian processes are separable.
\end{exa}

\begin{dfn}[Associated operator]\label{dfn:associated}
For a kernel $\Gamma\in L^2([0,T]^2)$ we associate an operator on $L^2([0,T])$, also denoted by $\Gamma$, as
$$
\Gamma f(t) = \int_0^T f(s)\Gamma(t,s)\, \d s. 
$$ 
\end{dfn}

\begin{dfn}[Isonormal process]\label{dfn:isonormal}
The isonormal process associated with $X$, also denoted by $X$, is the Gaussian family $(X(h), h\in\H_T)$, where the Hilbert space $\H_T=\H_T(R)$ is generated by the covariance $R$ as follows:
\begin{enumerate}
\item indicators $\1_t := \1_{[0,t)}$, $t\le T$, belong to $\H_T$.
\item $\H_T$ is endowed with the inner product $\la \1_t,\1_s\ra_{\H_T} := R(t,s)$.  
\end{enumerate}
\end{dfn}

Definition \ref{dfn:isonormal} states that $X(h)$ is the image of $h\in\H_T$ in the isometry that extends the relation
$$
X\left(\1_t\right) := X_t
$$
linearly. Consequently, we can define:

\begin{dfn}[Wiener integral]\label{dfn:wi}
$X(h)$ is the \emph{Wiener integral} of the element $h\in\H_T$ with respect to $X$. We shall also denote
$$
\int_0^T h(t)\, \d X_t := X(h).
$$   
\end{dfn}

\begin{rem}
Eventually, all the following will mean the same:
$$
X(h) = \int_0^T h(t)\, \d X_t = \int_0^T h(t)\, \delta X_t
= I_{T,1}(h) = I_T(h).
$$
\end{rem}

\begin{rem}
The Hilbert space $\H_T$ is separable if and only if $X$ is separable.
\end{rem}

\begin{rem}
Due to the completion under the inner product $\la\cdot,\cdot\ra_{\H_T}$ it may happen that the space $\H_T$ is not a space of functions, but contains distributions, cf. \cite{pipiras-taqqu} for the case of fractional Brownian motions with Hurst index bigger than half.  
\end{rem}

\begin{dfn}
The function space $\H_T^0\subset \H_T$ is the space of functions that can be approximated by step-functions on $[0,T]$ in the inner product $\la\cdot,\cdot\ra_{\H_T}$. 
\end{dfn}

\begin{exa}
If the covariance $R$ is of bounded variation, then $\H_T^0$ is the space of functions $f$ satisfying
$$
\int_0^T\!\int_0^T |f(t)f(s)|\, |R|(\d s, \d t) < \infty.
$$
\end{exa}

\begin{rem}
Note that it may be that $f\in\H_T^0$ but for some $T'<T$ we have $f\1_{T'}\not\in \H_{T'}^0$, cf. \cite{bender-elliott} for an example with fractional Brownian motion with Hurst index less than half.  For this reason we keep the notation $\H_T$ instead of simply writing $\H$. For the same reason we include the dependence of $T$ whenever there is one.
\end{rem}

%%%%%%%%%%%%%%%%%%%%%%%%%%%%%%%%%%%%%%%%%%%%%%%%%%%%%%%%%%%%%%%%%%%%%%%%%%%%%%%    
%%%%%%%%%%%%%%%%%%%%%%%%%%%%%%%%%%%%%%%%%%%%%%%%%%%%%%%%%%%%%%%%%%%%%%%%%%%%%%%
\section{Fredholm Representation}\label{sect:fredholm}

\begin{thm}[Fredholm representation]\label{thm:fredholm}
Let $X=(X_t)_{t\in [0,T]}$ be a separable centered Gaussian process. Then there exists a kernel $K_T\in L^2([0,T]^2)$ and a Brownian motion $W=(W_t)_{t\ge0}$, independent of $T$, such that
\begin{equation}\label{eq:fredholm}
X_t = \int_0^T K_T(t,s)\, \d W_s
\end{equation}
if and only if the covariance $R$ of $X$ satisfies the trace condition
\begin{equation}\label{eq:trace-condition}
\int_0^T R(t,t)\, \d t < \infty.
\end{equation}

The representation \eqref{eq:fredholm} is unique in the sense that any other representation with kernel $\tilde K_T$, say, is connected to \eqref{eq:fredholm} by a unitary operator $U$ on $L^2([0,T])$ such that $\tilde K_T = U K_T$.
Moreover, one may assume that $K_T$ is symmetric.
\end{thm}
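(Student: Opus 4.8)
The plan is to read \eqref{eq:fredholm} as a continuous-time analogue of the Cholesky factorisation mentioned in the introduction: find an operator $K_T$ on $L^2([0,T])$ with $K_TK_T^*=R$ whose kernel lies in $L^2([0,T]^2)$, and then lift this factorisation probabilistically through the isonormal process of $X$. The \emph{necessity} of \eqref{eq:trace-condition} is immediate: if \eqref{eq:fredholm} holds, the Wiener isometry gives $R(t,t)=\E X_t^2=\int_0^T K_T(t,s)^2\,\d s$, so $\int_0^T R(t,t)\,\d t=\|K_T\|_{L^2([0,T]^2)}^2<\infty$.

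For \emph{sufficiency}, assume \eqref{eq:trace-condition}. First I would note that Cauchy--Schwarz gives $R(t,s)^2\le R(t,t)R(s,s)$, so $R\in L^2([0,T]^2)$ and its associated operator (Definition \ref{dfn:associated}) is a nonnegative self-adjoint Hilbert--Schmidt operator on $L^2([0,T])$. The next point is that this operator is in fact \emph{trace class} with $\Tr R=\int_0^T R(t,t)\,\d t$: condition \eqref{eq:trace-condition} says exactly that $X$ is a centered Gaussian element of the Hilbert space $L^2([0,T])$ with $\E\|X\|_{L^2([0,T])}^2<\infty$, the covariance operator of such an element is trace class with trace $\E\|X\|^2$, and a direct computation identifies it with the integral operator of $R$ (this is also a Mercer-type statement). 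I would then set $K_T:=R^{1/2}$, the nonnegative self-adjoint square root. Since $\|K_T\|_{L^2([0,T]^2)}^2=\Tr(K_T^2)=\Tr R<\infty$, $K_T$ is Hilbert--Schmidt, hence has a kernel in $L^2([0,T]^2)$, which may be chosen symmetric because $K_T$ is self-adjoint; and $K_TK_T^*=R$ reads $\int_0^T K_T(t,u)K_T(s,u)\,\d u=R(t,s)$, which is the covariance identity behind \eqref{eq:fredholm}.

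Next I would build the Brownian motion \emph{on a suitable extension of the original probability space} — not merely a process with the right law — and this is where separability enters. The assignment $\1_t\mapsto K_T(t,\cdot)$ extends linearly to an isometry $J\colon\H_T\to L^2([0,T])$, since $\la\1_t,\1_s\ra_{\H_T}=R(t,s)=\la K_T(t,\cdot),K_T(s,\cdot)\ra_{L^2([0,T])}$; its range is the closed span of the rows of $K_T$, whose orthogonal complement is $\ker K_T=\ker R$, so $J$ is a unitary from $\H_T$ onto $(\ker R)^\perp$. On $(\ker R)^\perp$ I define $W(g):=X(J^{-1}g)$, and on $\ker R$ I let $W$ be an independent white noise carried by an enlarged probability space; then $\big(W(g)\big)_{g\in L^2([0,T])}$ is an isonormal Gaussian process on $L^2([0,T])$ (here $\H_T$ is separable since $X$ is, which is what makes the white-noise extension possible), so $W_t:=W(\1_{[0,t)})$, extended to $[0,\infty)$ by appending independent increments, is a Brownian motion. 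Finally $X_t=X(\1_t)=W(J\1_t)=W(K_T(t,\cdot))=\int_0^T K_T(t,s)\,\d W_s$, which is \eqref{eq:fredholm}; along the way one checks, using separability of $\H_T$ and measurability of $R$, that $t\mapsto\1_t$ is strongly measurable so that $K_T(t,s)$ has a jointly measurable representative.

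For \emph{uniqueness}, if $\tilde K_T$ gives another representation with Brownian motion $\tilde W$, then $\tilde K_T\tilde K_T^*=R=K_TK_T^*$, and the essential uniqueness of the polar decomposition produces a unitary $U$ on $L^2([0,T])$ with $\tilde K_T=UK_T$, with $\tilde W$ the corresponding transform of $W$; the symmetry of $K_T$ is simply the choice $K_T=R^{1/2}$. I expect the main obstacles to be two: the identification $\Tr R=\int_0^T R(t,t)\,\d t$ for a covariance kernel that is only $L^2$ (not continuous), since this is precisely what makes \eqref{eq:trace-condition} the exact dividing line; and the production of $W$ on the original (enlarged) probability space rather than merely in distribution — the real content is that these two can be achieved at once, and separability of $X$ is exactly the hypothesis that makes the second possible.
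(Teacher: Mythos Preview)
Your argument is correct and follows the same core idea as the paper: both take the positive square root of the covariance operator, the paper via the explicit Mercer eigenexpansion $K_T(t,s)=\sum_i\sqrt{\lambda_i^T}\,e_i^T(t)e_i^T(s)$ and you via the abstract $K_T:=R^{1/2}$, which of course coincide. Your write-up is in fact more complete than the paper's on three points: you prove the necessity of \eqref{eq:trace-condition} (the paper does not), you argue uniqueness through the polar decomposition rather than simply declaring it ``obvious from the square-root relation'', and you construct $W$ on an enlargement of the original probability space, whereas the paper's proof establishes \eqref{eq:fredholm} only in law and relegates the stronger statement to a remark. Your identification $\Tr R=\int_0^T R(t,t)\,\d t$ via viewing $X$ as a Gaussian element of $L^2([0,T])$ is also cleaner in the non-continuous case than an appeal to Mercer, whose classical hypotheses include continuity of the kernel.
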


\begin{proof}%[Proof of Theorem \ref{thm:fredholm}]
Let us first remark that \eqref{eq:trace-condition} is precisely what we need to invoke the Mercer's theorem and take square root in the resulting expansion.

Now, by the Mercer's theorem we can expand the covariance function $R$ on $[0,T]^2$ as
\begin{equation}\label{eq:Mercer}
R(t,s) = \sum_{i=1}^\infty \lambda_i^T e_i^T(t) e_i^T(s),
\end{equation}
where $(\lambda_i^T)_{i=1}^\infty$ and $(e_i^T)_{i=1}^\infty$ are the eigenvalues and the eigenfunctions of the covariance operator
$$
R_Tf (t) = \int_0^T f(s) R(t,s)\, \d s.
$$
Moreover, $(e_i^T)_{i=1}^\infty$ is an orthonormal system on $L^2([0,T])$.

Now, $R_T$, being a covariance operator, admits a square root operator $K_T$ defined by the relation 
\begin{equation}\label{eq:square-root}
\int_0^T e_i^T(s) R_T e_j^T(s)\, \d s =
\int_0^T K_T e_i^T(s) K_T e_j^T(s)\, \d s
\end{equation}
for all $e_i^T$ and $e_j^T$. Now, condition \eqref{eq:trace-condition} means that $R_T$ is trace class and, consequently, $K_T$ is Hilbert--Schmidt.  In particular, $K_T$ is a compact operator.  Therefore, it admits a kernel. Indeed, a kernel $K_T$ can be defined by using the Mercer expansion \eqref{eq:Mercer} as
\begin{equation}\label{eq:mercer-square-root}
K_T(t,s) = \sum_{i=1}^\infty \sqrt{\lambda_i^T}e_i^T(t)e_i^T(s).
\end{equation}
This kernel is obviously symmetric. Now, it follows that
$$
R(t,s) = \int_0^T K_T(t,u)K_T(s,u)\, \d u,
$$
and the representation \eqref{eq:fredholm} follows from this.  

Finally, let us note that the uniqueness upto a unitary transformation is obvious from the square-root relation \eqref{eq:square-root}.
\end{proof}
\begin{rem}
The Fredholm representation \eqref{eq:fredholm} holds also for infinite intervals, i.e. $T=\infty$, if the trace condition \eqref{eq:trace-condition} holds. Unfortunately, this is seldom the case.
\end{rem}
\begin{rem}
The above proof shows that the Fredholm representation \eqref{eq:fredholm} holds in law.  However, one can also construct the process $X$ via \eqref{eq:fredholm} for a given Brownian motion $W$. In this case, the representation \eqref{eq:fredholm} holds of course in $L^2$.  Finally, note that in general it is not possible to construct the Brownian motion in the representation \eqref{eq:fredholm} from the process $X$.  Indeed, there might not be enough randomness in $X$.  To construct $W$ from $X$ one needs that the indicators $\1_t$, $t\in[0,T]$, belong to the range of the operator $K_T$.  
\end{rem}

\begin{rem}
We remark that the separability of $X$ ensures representation of form \eqref{eq:fredholm} where the kernel $K_T$ only satisfies a weaker condition $K_T(t,\cdot)\in L^2([0,T])$ for all $t\in[0,T]$, which may happen if the trace condition \eqref{eq:trace-condition} fails.  In this case, however, the associated operator $K_T$ does not belong to $L^2([0,T])$, which may be undesirable.
\end{rem}

\begin{exa}\label{exa:degenerate}
Let us consider the following very degenerate case:
Suppose $X_t = f(t)\xi$, where $f$ is deterministic and $\xi$ is a standard normal random variable.  Suppose $T>1$. Then 
\begin{equation}
\label{eq:example_toy}
X_t = \int_0^T f(t)\1_{[0,1)}(s)\, \d W_s.
\end{equation}
So, $K_T(t,s) =f(t)\1_{[0,1)}(s)$.  Now, if $f\in L^2([0,T])$, then condition \eqref{eq:trace-condition} is satisfied and $K_T\in L^2([0,T]^2)$.  On the other hand, even if $f\notin L^2([0,T])$ we can still write $X$ in form (\ref{eq:example_toy}). However, in this case the kernel $K_T$ does not belong to $L^2([0,T]^2)$. 
\end{exa}

\begin{exa}
Consider a truncated series expansion 
$$
X_t = \sum_{k=1}^n e_k^T(t)\xi_k, 
$$
where $\xi_k$ are independent standard normal random variables and 
$$
e_k^T(t) = \int_0^t \tilde e_k^T(s)\, \d s,
$$ 
where $\tilde e_k^T$, $k\in\N$, is an orthonormal basis in $L^2([0,T])$. Now it is straightforward to check that this process is not \emph{purely non-deterministic} (see \cite{cramer} for definition) and consequently, $X$ cannot have Volterra representation while it is clear that $X$ admits a Fredholm representation. On the other hand, by choosing the functions $\tilde e_k^T$ to be the trigonometric basis on $L^2([0,T])$, $X$ is a finite-rank approximation of the Karhunen--Lo\`eve representation of standard Brownian motion on $[0,T]$. Hence by letting $n$ tend to infinity we obtain the standard Brownian motion, and hence a Volterra process. 
\end{exa}

\begin{exa}
Let $W$ be a standard Brownian motion on $[0,T]$ and consider the Brownian bridge. Now, there are two representations of the Brownian bridge (see \cite{sy} and references therein on the representations of Gaussian bridges).  The orthogonal representation is 
$$
B_t = W_t-\frac{t}{T}W_T.
$$  
Consequently, $B$ has a Fredholm representation with kernel $K_T(t,s) = \1_t(s)-t/T$.  The canonical representation of the Brownian bridge is
$$
B_t = (T-t)\int_0^t \frac{1}{T-s}\, \d W_s.
$$
Consequently, the Brownian bridge has also a Volterra-type representation with kernel 
$K(t,s) = (T-t)/(T-s)$. 
\end{exa}

%%%%%%%%%%%%%%%%%%%%%%%%%%%%%%%%%%%%%%%%%%%%%%%%%%%%%%%%%%%%%%%%%%%%%%%%%%%%%%%
%%%%%%%%%%%%%%%%%%%%%%%%%%%%%%%%%%%%%%%%%%%%%%%%%%%%%%%%%%%%%%%%%%%%%%%%%%%%%%%
\section{Transfer Principle and Stochastic Analysis}\label{sect:sa}

%%%%%%%%%%%%%%%%%%%%%%%%%%%%%%%%%%%%%%%%%%%%%%%%%%%%%%%%%%%%%%%%%%%%%%%%%%%%%%%
\subsection{Wiener Integrals}\label{subs:wi}

The following Theorem \ref{thm:transfer-principle-wi} is the transfer principle in the context of Wiener integrals.  The same principle extends to multiple Wiener integrals and Malliavin calculus later in the following subsections.

Recall that for any kernel $\Gamma\in L^2([0,T]^2)$ its associated operator on $L^2([0,T])$ is
$$
\Gamma f(t) = \int_0^T f(s)\Gamma(t,s)\, \d s.
$$

\begin{dfn}[Adjoint associated operator]\label{dfn:adjoint-associated}
The adjoint associated operator $\Gamma^*$ of a kernel $\Gamma\in L^2([0,T]^2)$ is defined by linearly extending the relation
$$
\Gamma^*\1_t = \Gamma(t,\cdot).
$$
\end{dfn}

\begin{rem}
\label{rem:adjoint}
The name and notation of ``adjoint'' for $K_T^*$ comes from Al\`os, Mazet and Nualart \cite{alos-mazet-nualart} where they showed that in their Volterra context $K_T^*$ admits a kernel and is an adjoint of $K_T$ in the sense that
$$
\int_0^T K_T^*f(t)\, g(t)\, \d t =
\int_0^T f(t)\, K_Tg(\d t). 
$$
for step-functions $f$ and $g$ belonging to $L^2([0,T])$. It is straightforward to check that this statement is valid also in our case.
\end{rem}

\begin{exa}
Suppose the kernel $\Gamma(\cdot,s)$ is of bounded variation for all $s$ and that $f$ is nice enough. Then
$$
\Gamma^* f(s) = \int_0^T f(t)\Gamma(\d t,s).
$$ 
\end{exa}

\begin{thm}[Transfer principle for Wiener integrals]\label{thm:transfer-principle-wi}
Let $X$ be a separable centered Gaussian process with representation \eqref{eq:fredholm} and let $f\in \H_T$. Then
$$
\int_0^T f(t)\, \d X_t = \int_0^T K^*_T f (t)\, \d W_t.
$$
\end{thm}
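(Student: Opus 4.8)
The plan is to verify the identity first on the dense set of step functions (equivalently, on finite linear combinations of the indicators $\1_t$), and then pass to the limit by isometry. For a single indicator $\1_t$, the left-hand side is $X(\1_t) = X_t$, which by the Fredholm representation \eqref{eq:fredholm} equals $\int_0^T K_T(t,s)\,\d W_s$. On the other hand, by Definition \ref{dfn:adjoint-associated} we have $K_T^*\1_t = K_T(t,\cdot)$, so the right-hand side is also $\int_0^T K_T(t,s)\,\d W_s$. Thus the claimed equality holds for $f = \1_t$, and by linearity for all step functions $f = \sum_i a_i \1_{t_i}$.

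Next I would check that both sides extend continuously (indeed isometrically) from step functions to all of $\H_T$. For the left-hand side this is the definition of the Wiener integral $X(f)$ as the $L^2(\Omega)$-isometric extension from step functions, using $\|f\|_{\H_T}$. For the right-hand side, the key computation is that $f\mapsto K_T^*f$ is an isometry from $(\H_T, \langle\cdot,\cdot\rangle_{\H_T})$ into $L^2([0,T])$: for step functions $f,g$,
\begin{equation*}
\langle K_T^*\1_t, K_T^*\1_s\rangle_{L^2([0,T])} = \langle K_T(t,\cdot), K_T(s,\cdot)\rangle_{L^2([0,T])} = \int_0^T K_T(t,u)K_T(s,u)\,\d u = R(t,s) = \langle \1_t,\1_s\rangle_{\H_T},
\end{equation*}
where the third equality is exactly the factorization of $R$ obtained in the proof of Theorem \ref{thm:fredholm}. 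Hence $\|K_T^*f\|_{L^2([0,T])} = \|f\|_{\H_T}$ for step functions, and composing with the Wiener--Itô isometry for $W$ gives $\|\int_0^T K_T^*f(t)\,\d W_t\|_{L^2(\Omega)} = \|f\|_{\H_T}$. So both maps $f\mapsto X(f)$ and $f\mapsto \int_0^T K_T^*f\,\d W$ are isometries from $\H_T$ into $L^2(\Omega)$ that agree on the dense subset of step functions; therefore they agree on all of $\H_T$.

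The main subtlety — and the step I would be most careful about — is purely a matter of what $\H_T$ really is: since $\H_T$ is defined as an abstract completion, it may contain elements that are not genuine functions (as the remarks before Section \ref{sect:fredholm} warn), so the expression $K_T^*f$ for $f\in\H_T$ must be understood as the image of $f$ under the isometric extension of $\1_t\mapsto K_T(t,\cdot)$, not as a pointwise integral. Once one adopts that reading, the argument is just "isometries agreeing on a dense set agree everywhere," and there is no real obstacle; one should simply note explicitly that the density of step functions in $\H_T$ is built into Definition \ref{dfn:isonormal} and that the isometry property of $K_T^*$ shown above guarantees the extension is well defined.
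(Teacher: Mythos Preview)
Your proposal is correct and follows essentially the same approach as the paper: verify the identity on step functions using the definition of $K_T^*$ and the Fredholm representation, observe that $K_T^*$ is an isometry from $\H_T$ into $L^2([0,T])$, and extend by density. Your write-up is in fact more explicit than the paper's, spelling out the isometry computation $\langle K_T^*\1_t, K_T^*\1_s\rangle_{L^2([0,T])} = R(t,s) = \langle \1_t,\1_s\rangle_{\H_T}$ and the caveat about interpreting $K_T^*f$ for non-function elements of $\H_T$, both of which the paper leaves implicit.
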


\begin{proof}
Assume first that $f$ is an elementary function of form
$$
f(t) = \sum_{k=1}^n a_k \textbf{1}_{A_k}
$$
for some disjoint intervals $A_k=(t_{k-1},t_k]$. Then the claim follows by the very definition of the operator $K_T^*$ and Wiener integral with respect to $X$ together with representation \eqref{eq:fredholm}. Furthermore, this shows that $K_T^*$ provides an isometry between $\H_T$ and $L^2([0,T])$. Hence $\H_T$ can be viewed as a closure of elementary functions with respect to $\| f\|_{\H_T} = \| K_T^*f\|_{L^2([0,T])}$ which proves the claim. 
\end{proof}

%%%%%%%%%%%%%%%%%%%%%%%%%%%%%%%%%%%%%%%%%%%%%%%%%%%%%%%%%%%%%%%%%%%%%%%%%%%%%%%
\subsection{Multiple Wiener Integrals}\label{subs:mwi}

The study of multiple Wiener integrals go back to It\^o \cite{ito} who studied the case of Brownian motion.  Later Huang and Cambanis \cite{huang-cambanis} extended to notion to general Gaussian processes.  Dasgupta and Kallianpur \cite{dasgupta-kallianpur1,dasgupta-kallianpur2} and Perez-Abreu and Tudor \cite{perezabreu-tudor} studied multiple Wiener integrals in the context of fractional Brownian motion.  In \cite{dasgupta-kallianpur1,dasgupta-kallianpur2} a method that involved a prior control measure was used and in \cite{perezabreu-tudor} a transfer principle was used. Our approach here extends the transfer principle method used in \cite{perezabreu-tudor}.

We begin by recalling multiple Wiener integrals with respect to Brownian motion and then we apply transfer principle to generalize the theory to arbitrary Gaussian process.

%%%%%%%%%%%%%%%%%%%%%%%%%%%%%%%%%%%%%%%  COPY-PASTETTUA HÖLÖTYSTÄ VANHASTA JUTUSTA

Let $f$ be a elementary function on $[0,T]^p$ that vanishes on the diagonals, i.e. 
$$
f = \sum_{i_1,\ldots,i_p=1}^{n}
a_{i_1\ldots i_p} \1_{\Delta_{i_1}\times\cdots\times\Delta_{i_p}},
$$
where $\Delta_{k} := [t_{k-1},t_k)$ and $a_{i_1\ldots i_p} = 0$ whenever $i_k = i_\ell$ for some $k\ne \ell.$ For such $f$ we define the multiple Wiener integral as
\begin{eqnarray*}
I^{W}_{T,p}(f) &:=&
\int_0^T\cdots\int_0^T f(t_1,\ldots,t_p) \,
\delta W_{t_1}\cdots\delta W_{t_p} \\
&:=&
\sum_{i_1,\ldots,i_p=1}^{n}
a_{i_1\ldots i_p} \Delta W_{t_1}\cdots\Delta W_{t_p}, 
\end{eqnarray*}
where we have denoted $\Delta W_{t_k} := W_{t_k} - W_{t_{k-1}}.$
For $p=0$ we set $I^{W}_0(f) = f.$ Now, it can be shown that elementary functions that vanish on the diagonals are dense in $L^2([0,T]^p).$ Thus, one can extend the operator $I^{W}_{T,p}$ to the space
$L^2([0,T]^p)$. This extension is called the \emph{multiple Wiener integral} with respect to the Brownian motion.
\begin{rem}
It is well known that $I^{W}_{T,p}(f)$ can be understood as a
multiple of iterated Ito integral if and only if $f(t_1,\ldots,t_p) = 0$
unless $t_1\le\cdots\le t_p.$ In this case we have
$$
I^{W}_{T,p}(f) = p!
\int_0^T\int_0^{t_p}\cdots\int_0^{t_2}
f(t_1,\ldots,t_p)\,\d W_{t_1}\cdots\d W_{t_p}.
$$  
For the case of Gaussian processes that are not martingales this fact is totally useless.
\end{rem}
For a general Gaussian process $X$, recall first the Hermite polynomials:
$$
H_p(x) := \frac{(-1)^p}{p!}\e^{\frac{1}{2}x^2}
\frac{\d^p}{\d x^p}\left(\e^{-\frac{1}{2}x^2}\right).
$$
For any $p \ge 1$ let the $p$th Wiener chaos of $X$ be
the closed linear subspace of $L^2 (\Omega)$ generated by the random variables 
$\{ H_p \left( X(\varphi) \right),\ \varphi \in \mathcal{H}, \ \Vert \varphi \Vert_{\mathcal{H}} = 1\}$, where $H_p$ is the $p$th Hermite polynomial. It is well known that
the mapping $I_{p}^{X}(\varphi^{\otimes n}) = n! H_p \left( X(\varphi)\right)$ provides a linear isometry between the symmetric tensor product $\H^{\odot p}$
and the $p$th Wiener chaos. The random variables $I_{p}^{X}(\varphi^{\otimes p})$ are called \textit{multiple Wiener} integrals of order $p$ with respect to the Gaussian process $X$.

%%%%%%%%%%%%%%%%%%%%%%%%%%%%%%%%%%%%%%%

Let us now consider the multiple Wiener integrals $I_{T,p}$ for a general Gaussian process $X$. We \emph{define} the multiple integral $I_{T,p}$ by using the transfer principle in Definition \ref{dfn:transfer-principle-mwi} below and later argue that this is the ``correct'' way of defining them.
So, let $X$ be a centered Gaussian process on $[0,T]$ with covariance $R$ and representation \eqref{eq:fredholm} with kernel $K_T$.

\begin{dfn}[$p$-fold adjoint associated operator]\label{dfn:p-adjoint-associated}
Let $K_T$ be the kernel in \eqref{eq:fredholm} and let $K_T^*$ be its adjoint associated operator. Define
\begin{equation}\label{eq:k-star-tensor}
K_{T,p}^* := \left(K_T^*\right)^{\otimes p}.
\end{equation}
In the same way, define
$$
\H_{T,p} := \H_T^{\otimes p} \quad\mbox{ and }\quad
\H_{T,p}^0 := (\H_T^{\otimes p})^0. 
$$
Here the tensor products are understood in the sense of Hilbert spaces, i.e., they are closed under the inner product corresponding to the $p$-fold product of the underlying inner-product.
\end{dfn}

\begin{dfn}\label{dfn:transfer-principle-mwi}
Let $X$ be a centered Gaussian process with representation \eqref{eq:fredholm} and let $f\in \H_{T,p}$. Then
$$
I_{T,p}(f) := I_{T,p}^W\left(K_{T,p}^*f\right) 
$$
\end{dfn}

The following example should convince the reader that this is indeed the correct definition.

\begin{exa}
Let $p=2$ and let $h=h_1\otimes h_2$, where both $h_1$ and $h_2$ are step-functions. Then $$
(K_{T,2}^*h)(x,y) = (K_T^*h_1)(x)(K_T^*h_2)(y)
$$ 
and 
\begin{equation*}
\begin{split}
&I_{T,2}^W\left(K_{T,2}^*f\right) \\
&= \int_0^T K_T^* h_1(v) \d W_v \cdot \int_0^T K_T^* h_2(u) \d W_u - \la K^*_T h_1,K_T^* h_2\ra_{L^2([0,T])} \\
&= X(h_1)X(h_2)- \la h_1,h_2\ra_{\H_T}
\end{split}
\end{equation*}
as supposed to by analog of the Gaussian martingale case.
\end{exa}

The following proposition shows that our approach to define multiple Wiener integrals is consistent with the traditional approach where multiple Wiener integrals for more general Gaussian process $X$ are defined as the closed linear space generated by Hermite polynomials.

\begin{pro}\label{pro:multiple-hermite}
Let $H_p$ be the $p^{th}$ Hermite polynomial and let $h\in\H_{T}$. Then
$$
I_{T,p}\left(h^{\otimes p}\right) = p!\| h\|_{\H_T}^p H_p\left(\frac{X(h)}{\| h\|_{\H_T}}\right)
$$
\end{pro}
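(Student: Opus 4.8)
The plan is to reduce the identity to the well-known Brownian version and then transport it through the transfer principle, so that the whole proof becomes essentially an unwinding of Definition \ref{dfn:transfer-principle-mwi} together with the isometry property of $K_T^*$ already established in the proof of Theorem \ref{thm:transfer-principle-wi}. First I would record the classical fact for Brownian multiple integrals: for every $g\in L^2([0,T])$ with $\|g\|_{L^2([0,T])}=1$ one has $I^W_{T,p}(g^{\otimes p}) = p!\,H_p(W(g))$, where $W(g)=\int_0^T g(t)\,\d W_t$. This can be proved either by verifying it on elementary integrands vanishing on the diagonal and passing to the $L^2([0,T]^p)$-limit, or, more cleanly, by observing that with our normalization of the Hermite polynomials the quantities $J_p := I^W_{T,p}(g^{\otimes p})$ satisfy the three-term recursion $J_p = W(g)\,J_{p-1} - (p-1)\,J_{p-2}$ with $J_0=1$, $J_1=W(g)$, which is exactly the recursion satisfied by $p!\,H_p(W(g))$. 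Homogeneity of $I^W_{T,p}$ in its argument then upgrades this, for arbitrary $g\neq 0$, to
$$
I^W_{T,p}\!\left(g^{\otimes p}\right) = p!\,\|g\|_{L^2([0,T])}^p\, H_p\!\left(\frac{W(g)}{\|g\|_{L^2([0,T])}}\right).
$$

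Next I would unwind the definition. Since $h\in\H_T$ we have $h^{\otimes p}\in\H_{T,p}$, and by Definition \ref{dfn:transfer-principle-mwi} and \eqref{eq:k-star-tensor},
$$
I_{T,p}\!\left(h^{\otimes p}\right) = I^W_{T,p}\!\left(K_{T,p}^* h^{\otimes p}\right) = I^W_{T,p}\!\left((K_T^* h)^{\otimes p}\right),
$$
the last step using that $(K_T^*)^{\otimes p}$ acts on elementary tensors by $K_T^* h_1\otimes\cdots\otimes K_T^* h_p$ and hence carries $h^{\otimes p}$ to $(K_T^* h)^{\otimes p}$. Then I would invoke Theorem \ref{thm:transfer-principle-wi} and the fact, noted in its proof, that $K_T^*$ is an isometry from $\H_T$ into $L^2([0,T])$: this gives $\|K_T^* h\|_{L^2([0,T])} = \|h\|_{\H_T}$ and $W(K_T^* h) = \int_0^T K_T^* h(t)\,\d W_t = \int_0^T h(t)\,\d X_t = X(h)$. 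The case $h=0$ is trivial; for $h\neq 0$, combining the previous display with the Brownian identity (applied to $g=K_T^* h$) yields
$$
I_{T,p}\!\left(h^{\otimes p}\right) = p!\,\|K_T^* h\|_{L^2([0,T])}^p\, H_p\!\left(\frac{W(K_T^* h)}{\|K_T^* h\|_{L^2([0,T])}}\right) = p!\,\|h\|_{\H_T}^p\, H_p\!\left(\frac{X(h)}{\|h\|_{\H_T}}\right),
$$
which is the assertion. As a byproduct this shows that the transfer-principle definition of $I_{T,p}$ reproduces the Hermite-polynomial description of the $p$th Wiener chaos of $X$, which is the consistency claim promised before the proposition.

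I expect the only genuinely non-trivial ingredient to be the classical Brownian identity in the first paragraph; everything after it is bookkeeping with the definitions and with the isometry $K_T^*$. The one point there that deserves care is the passage from elementary integrands vanishing on the diagonal to the integrand $g^{\otimes p}$ (which is supported on the diagonal in the limiting sense): this is exactly where one uses the density of off-diagonal elementary functions in $L^2([0,T]^p)$ and the $L^2$-continuity of $I^W_{T,p}$, or, if one prefers the recursion argument, where one uses the product formula for Brownian multiple integrals. Either way the difficulty is standard and localized to the Brownian level, which is precisely the point of building the general theory on top of it via the transfer principle.
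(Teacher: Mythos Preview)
Your proof is correct and follows essentially the same route as the paper: unwind Definition \ref{dfn:transfer-principle-mwi} to write $I_{T,p}(h^{\otimes p}) = I^W_{T,p}\!\left((K_T^* h)^{\otimes p}\right)$, apply the classical Brownian identity $I^W_{T,p}(g^{\otimes p}) = p!\,H_p(W(g))$ for $\|g\|_{L^2}=1$, and then transport back via $W(K_T^* h)=X(h)$ and the isometry $\|K_T^* h\|_{L^2}=\|h\|_{\H_T}$. The only cosmetic difference is that the paper normalizes to $\|h\|_{\H_T}=1$ at the outset and cites \cite[Proposition 1.1.4]{nualart} for the Brownian identity, whereas you sketch that identity via the three-term recursion and carry the norm through by homogeneity.
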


\begin{proof}
First note that without loss of generality we can assume $\| h\|_{\H_T}=1$. Now by the definition of multiple Wiener integral with respect to $X$ we have
$$
I_{T,p}\left(h^{\otimes p}\right) = I^W_{T,p}\left(K^*_{T,p}h^{\otimes p}\right),
$$
where 
$$
K^*_{T,p}h^{\otimes p} = \left(K^*_{T}h\right)^{\otimes p}.
$$
Consequently, by \cite[Proposition 1.1.4]{nualart} we obtain
$$
I_{T,p}\left(h^{\otimes p}\right) = p! H_p\left(W(K^*_T h)\right)
$$
which implies the result together with Theorem \ref{thm:transfer-principle-wi}.
\end{proof}

Proposition \ref{pro:multiple-hermite} extends to the following product formula, which is also well-known in the Gaussian martingale case, but apparently new for general Gaussian processes. Again, the proof is straightforward application of transfer principle.

\begin{pro}\label{pro:multiple-product}
Let $f\in \H_{T,p}$ and $g \in \H_{T,q}$.  Then
\begin{equation}\label{eq:multiple-product}
I_{T,p}(f) I_{T,q}(g) =
\sum_{r=0}^{p\wedge q} r!\binom{p}{r}\binom{q}{r} I_{T,p+q-2r}(f\tilde{\otimes}_{K_T,r} g),
\end{equation}
where
\begin{equation}
\label{eq:tensor_def}
f\tilde{\otimes}_{K_T,r} g = \left(K^*_{T,p+q-2r}\right)^{-1}\left(K^*_{T,p}f \tilde{\otimes}_r K^*_{T,q}g\right)
\end{equation}
and $\left(K^*_{T,p+q-2r}\right)^{-1}$ denotes the pre-image of $K^*_{T,p+q-2r}$.
\end{pro}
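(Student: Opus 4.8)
The plan is to reduce \eqref{eq:multiple-product} to the classical product formula for multiple Wiener integrals with respect to the Brownian motion $W$ and then transfer it back to $X$ via Definition \ref{dfn:transfer-principle-mwi}. Recall that for $\phi\in L^2([0,T]^p)$ and $\psi\in L^2([0,T]^q)$ the Brownian product formula (see, e.g., \cite{nualart}) reads
$$
I_{T,p}^W(\phi)\,I_{T,q}^W(\psi)=\sum_{r=0}^{p\wedge q} r!\binom{p}{r}\binom{q}{r}\,I_{T,p+q-2r}^W\!\big(\phi\,\tilde\otimes_r\,\psi\big),
$$
where $\tilde\otimes_r$ is the symmetrized $r$-contraction of square-integrable kernels. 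First I would apply this identity with $\phi=K_{T,p}^*f$ and $\psi=K_{T,q}^*g$; by Definition \ref{dfn:transfer-principle-mwi} its left-hand side is exactly $I_{T,p}(f)\,I_{T,q}(g)$. It then remains to identify, for each $r$, the term $I_{T,p+q-2r}^W(K_{T,p}^*f\,\tilde\otimes_r K_{T,q}^*g)$ with $I_{T,p+q-2r}(f\,\tilde\otimes_{K_T,r}g)$; by Definition \ref{dfn:transfer-principle-mwi} this is equivalent to showing that the pre-image in \eqref{eq:tensor_def} is well defined, i.e.\ that $K_{T,p}^*f\,\tilde\otimes_r K_{T,q}^*g$ lies in the range of $K_{T,p+q-2r}^*$ and that $K_{T,p+q-2r}^*$ is injective there.

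Injectivity (indeed, closedness of the range) is free: by Theorem \ref{thm:transfer-principle-wi} the operator $K_T^*$ is an isometry from $\H_T$ onto a closed subspace of $L^2([0,T])$, hence $K_{T,m}^*=(K_T^*)^{\otimes m}$ is an isometry from $\H_{T,m}$ onto a closed subspace of $L^2([0,T]^m)$. For the range statement I would first take $f=h_1\otimes\cdots\otimes h_p$ and $g=g_1\otimes\cdots\otimes g_q$ with $h_i,g_j$ step functions. The key computation is that the $r$-contraction of the simple tensors $K_{T,p}^*f=(K_T^*h_1)\otimes\cdots\otimes(K_T^*h_p)$ and $K_{T,q}^*g=(K_T^*g_1)\otimes\cdots\otimes(K_T^*g_q)$ factorizes into a product of scalars $\la K_T^*h_i,K_T^*g_j\ra_{L^2([0,T])}=\la h_i,g_j\ra_{\H_T}$ (the equality again by Theorem \ref{thm:transfer-principle-wi}) times a simple tensor of the remaining $K_T^*h_i$ and $K_T^*g_j$; since symmetrization commutes with $K_{T,m}^*$, this makes $K_{T,p}^*f\,\tilde\otimes_r K_{T,q}^*g=K_{T,p+q-2r}^*F$ for an explicit $F\in\H_{T,p+q-2r}$ built from the $h_i,g_j$, so $f\,\tilde\otimes_{K_T,r}g=F$. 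The general case $f\in\H_{T,p}$, $g\in\H_{T,q}$ follows by density, since linear combinations of simple tensors of step functions are dense in $\H_{T,p}$ and $\H_{T,q}$ by the construction of $\H_T$, the operators $K_{T,p}^*$, $K_{T,q}^*$ are continuous, the contraction $(\phi,\psi)\mapsto\phi\,\tilde\otimes_r\psi$ is a bounded bilinear map on $L^2$ by Cauchy--Schwarz, and the range of $K_{T,p+q-2r}^*$ is closed. Substituting $F=f\,\tilde\otimes_{K_T,r}g$ back into the Brownian product formula via Definition \ref{dfn:transfer-principle-mwi} yields \eqref{eq:multiple-product}.

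The one genuinely delicate point is precisely the well-definedness of the pre-image $(K_{T,p+q-2r}^*)^{-1}$ in \eqref{eq:tensor_def}: one has to be sure that the $r$-contraction of two kernels lying in the respective ranges of $K_{T,p}^*$ and $K_{T,q}^*$ again lands in the range of $K_{T,p+q-2r}^*$. The simple-tensor computation above is what makes this transparent, and closedness of the range (a consequence of $K_T^*$ being an isometry) is what lets one pass to the limit. An alternative way to run the simple-tensor step is to combine Proposition \ref{pro:multiple-hermite} with the classical product formula for Hermite polynomials applied to $X(h_1)$ and $X(h_2)$, which again comes down to the identity $\la h_i,g_j\ra_{\H_T}=\la K_T^*h_i,K_T^*g_j\ra_{L^2([0,T])}$.
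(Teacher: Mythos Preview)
Your proof is correct and follows the same route as the paper: apply the classical Brownian product formula (\cite[Proposition~1.1.3]{nualart}) to $\phi=K_{T,p}^*f$, $\psi=K_{T,q}^*g$ and use Definition~\ref{dfn:transfer-principle-mwi} on both sides. The paper's proof is a one-line appeal to this; your additional care about the well-definedness of the pre-image in \eqref{eq:tensor_def} (range and injectivity of $K_{T,p+q-2r}^*$) is welcome, and in fact the paper only remarks on this point after the proof via the comparison with the Hermite-polynomial product formula.
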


\begin{proof}
The proof follows directly from the definition of $I_{T,p}(f)$ and \cite[Proposition 1.1.3]{nualart}.
\end{proof}

\begin{exa}
Let $f\in\H_{T,p}$ and $g\in\H_{T,q}$ be of forms $f(x_1,\ldots,x_p) = \prod_{k=1}^p f_k(x_k)$ and $g(y_1,\ldots,y_p) = \prod_{k=1}^q g_k(y_k)$. Then
\begin{equation*}
\begin{split}
&\left(K^*_{T,p}f \tilde{\otimes}_r K^*_{T,q}g\right)(x_1,\ldots, x_{p-r},y_1,\ldots,y_{q-r})\\
&= \int_{[0,T]^r}\prod_{k=1}^{p-r}K^*_Tf_k(x_k)\prod_{j=1}^r K^*_T f_{p-r+j}(s_j) 
\\
&\phantom{=}\times
\prod_{k=1}^{q-r}K^*_Tg_k(y_k)\prod_{j=1}^{r} K^*_T g_{q-r+j}(s_j)\,\d s_1\cdots \d s_r\\
&= \prod_{k=1}^{p-r}K^*_Tf_k(x_k)\prod_{k=1}^{q-r}K^*_Tg_k(y_k)
\la \prod_{j=1}^r f_{p-r+j},\prod_{j=1}^r g_{q-r+j}\ra_{\H_{T,r}}.
\end{split}
\end{equation*}
Hence 
$$
f\tilde{\otimes}_{K_T,r} g = \prod_{k=1}^{p-r}f_k(x_k)\prod_{k=1}^{q-r}g_k(y_k)\la \prod_{j=1}^r f_j,\prod_{j=1}^r g_j\ra_{\H_{T,r}}
$$
as supposed to.
\end{exa}

\begin{rem}
In the literature multiple Wiener integrals are usually defined as the closed linear space spanned by Hermite polynomials. In such a case Proposition \ref{pro:multiple-hermite} is clearly true by the very definition. Furthermore, one has a multiplication formula (see e.g. \cite{pec-nourd})
$$
I_{T,p}(f) I_{T,q}(g) =
\sum_{r=0}^{p\wedge q} r!\binom{p}{r}\binom{q}{r} I_{T,p+q-2r}(f\tilde{\otimes}_{\H_T,r} g),
$$
where $f\tilde{\otimes}_{\H_T,r} g$ denotes symmetrization of tensor product
$$
f\otimes_{\H_T,r} g = \sum_{i_1,\ldots,i_r=1}^\infty \la f,e_{i_1} \otimes \ldots \otimes e_{i_r}\ra_{\H_T^{\otimes r}} \otimes \la g,e_{i_1} \otimes \ldots \otimes e_{i_r}\ra_{\H_T^{\otimes r}}.
$$
and $\{e_k,k=1,\ldots\}$ is a complete orthonormal basis of the Hilbert space $\H_T$. Clearly, by Proposition \ref{pro:multiple-hermite}, both formulas coincide. This also shows that \eqref{eq:tensor_def} is well-defined.
\end{rem}

%%%%%%%%%%%%%%%%%%%%%%%%%%%%%%%%%%%%%%%%%%%%%%%%%%%%%%%%%%%%%%%%%%%%%%%%%%%%%%%
\subsection{Malliavin calculus and Skorohod integrals}\label{subs:mc}
We begin by recalling some basic facts on Malliavin calculus.

\begin{dfn}
Denote by $\mathcal{S}$ the space of all smooth random variables of the form
\begin{equation*}
F= f(X(\varphi_1), \cdots, X(\varphi_n)), \qquad \varphi_1, \cdots, \varphi_n \in \H_T,
\end{equation*}
where $f \in C_{b}^{\infty}(\R^n)$ i.e. $f$ and all its derivatives are bounded. The Malliavin derivative $D_T F=D_T^{X}F$ of $F$ is an element of 
$L^{2}(\Omega;\H_T)$ defined by
\begin{equation*}
D_T F= \sum_{i=1}^{n} \partial_{i} f (X(\varphi_1), \cdots, X(\varphi_n)) \varphi_i.
\end{equation*}
In particular, $D_T X_t = \1_t$.
\end{dfn}

\begin{dfn}
Let $\mathbb{D}^{1,2}= \mathbb{D}_X^{1,2}$ be the Hilbert space of all square integrable Malliavin differentiable random variables defined as the 
closure of $\mathcal{S}$ with respect to norm 
\begin{equation*}
{\Vert F \Vert}_{1,2}^{2} = \E\left[|F|^{2}\right] + \E\left[\Vert D_T F \Vert_{\H_T}^{2}\right].
\end{equation*}
\end{dfn}

The divergence operator $\delta_T$ is defined as the adjoint operator of the Malliavin derivative $D_T$.

\begin{dfn}
The domain $\text{Dom } \delta_T$ of the operator $\delta_T$ is the set of random variables $u\in L^{2}(\Omega;\H_T)$ satisfying
\begin{equation}
\label{eq:domain_def}
\big| \E \langle D_T F, u \rangle_{\H_T} \big| \le c_{u} \Vert F \Vert_{L^{2}}
\end{equation}
for any $F \in \mathbb{D}^{1,2}$ and some constant $c_u$ depending only on $u$. For $u\in\text{Dom } \delta_T$ the divergence operator $\delta_T(u)$ is a square integrable random variable defined by the duality relation
\begin{equation*}
 \E\left[F \delta_T(u)\right] = \E {\langle D_T F, u \rangle}_{\H_T}
\end{equation*}
for all $F \in \mathbb{D}^{1,2}$.
\end{dfn}
\begin{rem}
It is well-known that $\mathbb{D}^{1,2}\subset \text{Dom } \delta_T$.
\end{rem}

We use the notation
\begin{equation*}
\delta_T(u) = \int_0^T u_s\,\delta X_s.
\end{equation*}

\begin{thm}[Transfer principle for Malliavin calculus]\label{thm:transfer-principle-mc}
Let $X$ be a separable centered Gaussian process with Fredholm representation \eqref{eq:fredholm}.  Let $D_T$ and $\delta_T$ be the Malliavin derivative and the Skorohod integral with respect to $X$ on $[0,T]$.  Similarly, let $D_T^W$ and $\delta_T^W$ be the Malliavin derivative and the Skorohod integral with respect to the Brownian motion $W$ of \eqref{eq:fredholm} restricted on $[0,T]$. Then
$$
\delta_T = \delta_T^W K_T^* \quad\mbox{ and }\quad K_T^* D_T = D_T^W.
$$
\end{thm}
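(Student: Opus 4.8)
The plan is to reduce both identities to the case of smooth random variables, where the two Malliavin derivatives can be written out explicitly and compared term by term, and then to propagate the identities by density and closability. The bridge between the two calculi is already supplied by Theorem~\ref{thm:transfer-principle-wi}: since $X(\varphi)=W(K_T^*\varphi)$ for every $\varphi\in\H_T$, each smooth functional $F=f(X(\varphi_1),\dots,X(\varphi_n))\in\mathcal{S}$ is simultaneously a smooth functional of $W$, namely $F=f\bigl(W(K_T^*\varphi_1),\dots,W(K_T^*\varphi_n)\bigr)$.

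First I would establish $K_T^* D_T=D_T^W$ on $\mathcal{S}$. For such an $F$ the definition of the Malliavin derivative gives $D_T F=\sum_{i=1}^n \partial_i f(X(\varphi_1),\dots,X(\varphi_n))\,\varphi_i$, while applying the same definition to the Brownian representation of $F$ gives $D_T^W F=\sum_{i=1}^n \partial_i f(X(\varphi_1),\dots,X(\varphi_n))\,K_T^*\varphi_i$. Because $K_T^*$ is linear and, by the isometry property recorded in the proof of Theorem~\ref{thm:transfer-principle-wi}, a bounded operator $\H_T\to L^2([0,T])$, it passes through the finite sum and through $L^2(\Omega;\cdot)$, so $D_T^W F=K_T^* D_T F$ for all $F\in\mathcal{S}$. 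The same isometry yields $\E\bigl[\|D_T F\|_{\H_T}^2\bigr]=\E\bigl[\|K_T^* D_T F\|_{L^2([0,T])}^2\bigr]=\E\bigl[\|D_T^W F\|_{L^2([0,T])}^2\bigr]$, so the two $(1,2)$-norms agree on $\mathcal{S}$, and closability of $D_T$ and of $D_T^W$ then carries $K_T^* D_T=D_T^W$ to all of $\mathbb{D}^{1,2}=\mathbb{D}^{1,2}_X$. As a consistency check, $D_T X_t=\1_t$ maps to $K_T^*\1_t=K_T(t,\cdot)=D_T^W X_t$.

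The divergence identity $\delta_T=\delta_T^W K_T^*$ I would obtain purely by duality. Let $u\in\mathrm{Dom}\,\delta_T$. For every $F\in\mathbb{D}^{1,2}$,
\begin{equation*}
\E\bigl[F\,\delta_T(u)\bigr]=\E\langle D_T F,u\rangle_{\H_T}=\E\langle K_T^* D_T F,\,K_T^* u\rangle_{L^2([0,T])}=\E\langle D_T^W F,\,K_T^* u\rangle_{L^2([0,T])},
\end{equation*}
using the isometry of $K_T^*$ in the middle step and the derivative identity in the last. Since the left-hand side is bounded by $c_u\|F\|_{L^2}$, the element $K_T^* u$ satisfies the defining estimate \eqref{eq:domain_def} for $\mathrm{Dom}\,\delta_T^W$, and the duality relation characterising $\delta_T^W$ forces $\delta_T^W(K_T^* u)=\delta_T(u)$; running the chain backwards shows the two domains correspond under $K_T^*$.

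The step I expect to be the genuine obstacle is not the algebra above but the measurability bookkeeping: as the remark following Theorem~\ref{thm:fredholm} points out, $\sigma(X)$ can be strictly smaller than $\sigma(W|_{[0,T]})$, equivalently $K_T^*$ need not map $\H_T$ onto all of $L^2([0,T])$. Then a generic smooth functional of $W$ is not a functional of $X$, and the test functionals $F$ in the duality argument range only over the $\sigma(X)$-measurable part of $\mathbb{D}^{1,2}$. The clean way around this is to replace $L^2([0,T])$ throughout by the closed subspace $V:=\overline{\mathrm{range}\,K_T^*}$, on which $W$ still induces an isonormal Gaussian structure with $\sigma(X)=\sigma\{W(h):h\in V\}$; then $K_T^*:\H_T\to V$ is an isometric isomorphism, the two Malliavin--Sobolev spaces and divergence domains are genuinely identified, and the identities of the previous two paragraphs say exactly what the theorem claims. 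The one point to verify for this reduction is that $\delta_T^W$ computed in $L^2([0,T])$ and in $V$ coincide on $V$-valued integrands, which is a routine conditioning argument.
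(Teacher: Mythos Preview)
Your argument is essentially the paper's own: compute on smooth cylindrical functionals using $X(\varphi)=W(K_T^*\varphi)$ and the isometry of $K_T^*$, extend by closability to get $K_T^* D_T=D_T^W$, and then obtain $\delta_T=\delta_T^W K_T^*$ by duality via $\E\langle D_T F,u\rangle_{\H_T}=\E\langle D_T^W F,K_T^* u\rangle_{L^2([0,T])}$. The paper's proof is terser and simply cites \cite{alos-mazet-nualart}, whereas you spell out the density and closability step and, in your final paragraph, flag a genuine subtlety the paper leaves implicit: when $K_T^*$ is not onto $L^2([0,T])$, the $W$-smooth functionals are a strictly larger class than the $X$-smooth ones, and the duality must be run with test functionals in $\sigma(X)$; your proposed fix of restricting to $V=\overline{\mathrm{range}\,K_T^*}$ is the right resolution and is more careful than the paper.
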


\begin{proof}
The proof follows directly from transfer principle and the isometry provided by $K^*_T$ with same arguments as in \cite{alos-mazet-nualart}. Indeed, by isometry we have
$$
\H_T = (K^*_T)^{-1}(L^2([0,T])),
$$
where $(K^*_T)^{-1}$ denotes the pre-image, which implies that
$$
\mathbb{D}^{1,2}(\H_T) = (K^*_T)^{-1}(\mathbb{D}^{1,2}_W(L^2([0,T])))
$$
which justifies $K^*_T D_T = D_T^W$.
Furthermore, we have relation
$$
\E{\la u,D_T F\ra}_{\H_T} = \E{\la K_T^*u,D^W_T F\ra}_{L^2([0,T])}
$$
for any smooth random variable $F$ and $u\in L^2(\Omega;\H_T)$.
Hence, by the very definition of $\text{Dom }\delta$ and transfer principle, we obtain
$$
\text{Dom }\delta_T = (K_T^*)^{-1}(\text{Dom }\delta_T^W)
$$
and $\delta_T(u) = \delta_T^W(K_T^*u)$ proving the claim. 
\end{proof}

Now we are ready show that the definition of the multiple Wiener integral $I_{T,p}$ in Subsection \ref{subs:mwi} is correct in the sense that it agrees with the iterated Skorohod integral.

\begin{pro}\label{pro:mwi-skorohod}
Let $h\in \H_{T,p}$ be of form $h(x_1,\ldots,x_p)=\prod_{k=1}^p h_k(x_k)$. Then $h$ is iteratively $p$ times Skorohod integrable and
\begin{equation}
\label{eq:iterative_skorohod}
\int_0^T\!\cdots\!\int_0^T h(t_1,\ldots,t_p)\, 
\delta X_{t_1}\cdots\delta X_{t_p}
= I_{T,p}(h)
\end{equation}
Moreover, if $h\in \H_{T,p}^0$ is such that it is $p$ times iteratively Skorohod integrable, then \eqref{eq:iterative_skorohod} still holds. 
\end{pro}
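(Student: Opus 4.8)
The plan is to transfer the whole statement to Brownian motion via the transfer principle of Theorem~\ref{thm:transfer-principle-mc}, where the coincidence of iterated Skorohod integrals with multiple Wiener integrals is classical, and then to read off the claim from Definition~\ref{dfn:transfer-principle-mwi}. Recall that Theorem~\ref{thm:transfer-principle-mc} gives the operator identity $\delta_T=\delta_T^W K_T^*$ together with the matching of domains $\mathrm{Dom}\,\delta_T=(K_T^*)^{-1}(\mathrm{Dom}\,\delta_T^W)$. The structural point is that in an iterated integral the $j$-th integration touches only the $j$-th variable, so the adjoint operator in that step acts on the $j$-th slot alone; since the copies of $K_T^*$ acting on distinct slots commute with one another and with the Brownian Skorohod integrations carried out in the remaining slots, applying $\delta_T=\delta_T^W K_T^*$ successively $p$ times converts the $p$-fold iterated $X$-Skorohod integral of $h$ into the $p$-fold iterated $W$-Skorohod integral of $(K_T^*)^{\otimes p}h=K_{T,p}^*h$.

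First I would treat the product case $h=h_1\otimes\cdots\otimes h_p$ by induction on $p$, peeling off one Skorohod integral at a time. For $p=1$ the statement is Theorem~\ref{thm:transfer-principle-wi}. For the inductive step, performing the innermost Skorohod integration (say in the variable $t_p$, the choice being immaterial for a product integrand) gives $\int_0^T h(t_1,\ldots,t_{p-1},s)\,\delta X_s=\bigl(\prod_{k=1}^{p-1}h_k(t_k)\bigr)\delta_T(h_p)=\bigl(\prod_{k=1}^{p-1}h_k(t_k)\bigr)\delta_T^W(K_T^*h_p)$ by Theorem~\ref{thm:transfer-principle-mc}; iterating, the $p$-fold iterated $X$-integral of $h$ equals the $p$-fold iterated $W$-integral of $K_{T,p}^*h=(K_T^*h_1)\otimes\cdots\otimes(K_T^*h_p)\in L^2([0,T]^p)$, where at each stage the relevant $K_T^*$ acts on a single frozen variable and commutes past the Brownian integrations in the other variables, and integrability at every stage is clear because each partial integrand is the product of a deterministic $L^2$ function with a smooth, square-integrable random variable. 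The iterated Skorohod integral with respect to $W$ of a product function equals the multiple Wiener integral $I_{T,p}^W(K_{T,p}^*h)$ — this is the standard recursion $\delta_T^W\bigl(I_{T,m}^W(f)\,g\bigr)=I_{T,m+1}^W(f\otimes g)$ applied repeatedly, see \cite[Ch.~1]{nualart} — and by Definition~\ref{dfn:transfer-principle-mwi} the latter is exactly $I_{T,p}(h)$.

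For the ``moreover'' part, let $h\in\H_{T,p}^0$ be $p$ times iteratively Skorohod integrable with respect to $X$. Because $h$ lies in the function space $\H_{T,p}^0$, freezing variables is meaningful and $K_{T,p}^*h$ is an honest element of $L^2([0,T]^p)$ on whose slots the adjoint operators act as genuine functions (Remark~\ref{rem:adjoint}). Applying $\delta_T=\delta_T^W K_T^*$ and the domain identity of Theorem~\ref{thm:transfer-principle-mc} one variable at a time, iterative Skorohod integrability of $h$ with respect to $X$ is equivalent to iterative Skorohod integrability of $K_{T,p}^*h$ with respect to $W$, with equal iterated values. It then remains to invoke the Brownian fact that whenever $g\in L^2([0,T]^p)$ is $p$ times iteratively Skorohod integrable with respect to $W$, its iterated Skorohod integral equals $I_{T,p}^W(g)$; combined with Definition~\ref{dfn:transfer-principle-mwi} this yields \eqref{eq:iterative_skorohod}.

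The main obstacle is the bookkeeping in the ``one variable at a time'' transfer: one must justify carefully that the operator identity $\delta_T=\delta_T^W K_T^*$, and in particular the equality of domains, may be applied in a single frozen variable while the remaining variables carry both deterministic mass and the randomness produced by the inner integrations — in effect a Fubini-type statement for the partial operators $K_T^*$ and $\delta_T^W$ acting in disjoint slots, together with the commutation of these partial operators. For product integrands this is transparent; for general $h\in\H_{T,p}^0$ it is precisely where the restriction to the function space $\H_{T,p}^0$ and the standing assumption of iterative Skorohod integrability are used, so that beyond the classical Brownian statement no delicate existence or limiting questions arise.
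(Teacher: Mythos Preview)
Your argument is correct, but it is organized differently from the paper's. You push the transfer principle all the way through first, reducing the iterated $X$-Skorohod integral to the iterated $W$-Skorohod integral of $K_{T,p}^*h$, and then quote the classical Brownian identity $\delta^{W,p}(g)=I_{T,p}^W(g)$ as a black box from \cite{nualart}. The paper instead keeps the induction at the level of $X$: assuming the inner $p$ integrals already yield $I_{T,p}(f_p)$, it computes $\delta_T\!\bigl(I_{T,p}(f_p)\,h_{p+1}\bigr)$ via \cite[Proposition~1.3.3]{nualart} and then identifies the outcome with $I_{T,p+1}(h)$ by invoking the product formula of Proposition~\ref{pro:multiple-product} with $q=1$. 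In other words, where you cite the Brownian recursion $\delta_T^W\bigl(I_{T,m}^W(f)\,g\bigr)=I_{T,m+1}^W(f\otimes g)$ as known, the paper effectively reproves its $X$-analogue using the machinery it has just built. Your route is cleaner and more in the spirit of the transfer principle; the paper's is more self-contained and showcases Proposition~\ref{pro:multiple-product}.

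For the ``moreover'' clause the two proofs also differ: the paper argues in one line by approximating $h\in\H_{T,p}^0$ with products of simple functions, whereas you transfer directly via the domain identity $\mathrm{Dom}\,\delta_T=(K_T^*)^{-1}(\mathrm{Dom}\,\delta_T^W)$ applied slot by slot. The commutation/Fubini issue you flag---that $K_T^*$ acting in one frozen slot commutes with $\delta_T^W$ in the others---is genuine but harmless here, since at each stage the integrand is a fixed random variable times a deterministic function of the remaining variables; this is exactly the ``restriction to $\H_{T,p}^0$ plus assumed iterative integrability'' that you invoke.
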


\begin{proof}
Again the idea is to use the transfer principle together with induction. Note first that the statement is true for $p=1$ by definition and assume next that the statement is valid for $k=1,\ldots,p$. We denote $f_j=\prod_{k=1}^j h_k(x_k)$. Hence, by induction assumption, we have
$$
\int_0^T\!\cdots\!\int_0^T h(t_1,\ldots,t_p,t_{v})\, 
\delta X_{t_1}\cdots\delta X_{t_p}\delta X_v = \int_0^T I_{T,p}(f_{p})h_{p+1}(v)\,\delta X_v.
$$
Put now $F=I_{T,p}(f_{p})$ and $u(t)=h_{p+1}(t)$. Hence by \cite[Proposition 1.3.3]{nualart} and by applying the transfer principle we obtain that $Fu$ belongs to $\mathrm{Dom}\,\delta_T$ and
\begin{equation*}
\begin{split}
\delta_T(Fu) &= \delta_T(u)F - {\la D_tF,u(t)\ra}_{\H_T}\\
&=I^W_{T}(K_T^*h_{p+1})I^W_{T,p}(K^*_{T,p}f_p) - p{\la I_{T,p-1}(f_{p}(\cdot,t)),h_{p+1}(t)\ra}_{\H_T}\\
&= I_{T}(h_{p+1})I_{T,p}(f_p) - pI^W_{T,p-1}(K_{T,p-1}^*f_p\tilde\otimes_1 K_{T}^* h_{p+1})\\
&=I_{T}(h_{p+1})I_{T,p}(f_p) - pI_{T,p-1}(f_p\tilde\otimes_{K_T,1} h_{p+1}).
\end{split}
\end{equation*}
Hence the result is valid also for $p+1$ by Proposition \ref{pro:multiple-product} with $q=1$. 

The claim for general $h\in \H_{T,p}^0$ follows by approximating with a products of simple function. 
\end{proof}

\begin{rem}
Note that \eqref{eq:iterative_skorohod} does not hold for arbitrary $h\in\H_{T,p}^0$ in general without the a priori assumption of $p$ times iterative Skorohod integrability. For example, let $p=2$, $X=B^H$ be a fractional Brownian motion with $H\leq \frac14$ and define $h_t(s,v)= \1_t(s)\1_s(v)$ for some fixed $t\in [0,T]$. Then
$$
\int_0^T h_t(s,v)\, \delta X_v = X_s\1_t(s) 
$$
But $X_\cdot\1_t$ does not belong to $\mathrm{Dom}\,\delta_T$ (see \cite{che-nua}).
\end{rem}

We end this section by providing an extension of It\^o formulas provided by Al\`os, Mazet and Nualart \cite{alos-mazet-nualart}. They considered Gaussian Volterra processes, i.e., they assumed the representation 
$$
X_t = \int_0^t K(t,s)\, \d W_s,
$$
where the Kernel $K$ satisfied certain technical assumptions. In \cite{alos-mazet-nualart}
it was proved that in the case of Volterra processes one has
\begin{equation}
\label{eq:ito}
f(X_t) = f(0) + \int_0^t f'(X_s)\,\delta X_s + 
\frac{1}{2} \int_0^t f''(X_s)\, \d R(s,s)
\end{equation}
if $f$ satisfies the growth condition
\begin{equation}
\label{eq:growth}
\max\left[|f(x)|,|f'(x)|,|f''(x)|\right] \le ce^{\lambda |x|^2}
\end{equation}
for some $c>0$ and $\lambda < \frac{1}{4}\left(\sup_{0\leq s \leq T}\E X_s^2\right)^{-1}$. In the following we will consider different approach which ables us to:
\begin{enumerate}
\item
prove that such formula holds with minimal requirements,
\item
give more instructive proof of such result,
\item
extend the result from Volterra context to more general Gaussian processes,
\item
drop some technical assumptions posed in \cite{alos-mazet-nualart}.
\end{enumerate}

For simplicity, we assume that the variance of $X$ is of bounded variation to guarantee the existence of the integral 
\begin{equation}\label{eq:R-integral}
\int_0^T f''(X_t)\,\d R(t,t).
\end{equation}
If the variance is not of bounded variation, then the integral \eqref{eq:R-integral} may be understood by integration by parts if $f''$ is smooth enough, or in the general case, via the inner product $\la\cdot,\cdot\ra_{\H_T}$. In Theorem \ref{thm:ito-skorohod-formula} we also have to assume that the variance of $X$ is bounded.

The result for polynomials is straightforward, once we assume that the paths of polynomials of $X$ belong to $L^2(\Omega;\H_T)$.

\begin{pro}[It\^o formula for polynomials]\label{pro:ito-skorohod-formula-polynomials}
Let $X$ be a separable centered Gaussian process with covariance $R$ and assume that $p$ is a polynomial. Furthermore, assume that for each polynomial $p$ we have $p(X_\cdot)\1_t\in L^2(\Omega;\H_T)$. Then for each $t\in[0,T]$ we have
\begin{equation}
\label{eq:ito-polynomial}
p(X_t) = p(X_0) + \int_0^t p'(X_s)\,\delta X_s + 
\frac{1}{2} \int_0^t p''(X_s)\, \d R(s,s)
\end{equation}
if and only if $X_\cdot\1_t$ belongs to $\mathrm{Dom}\,\delta_T$.
\end{pro}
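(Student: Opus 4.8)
By linearity of every term of \eqref{eq:ito-polynomial} in $p$ it suffices to treat the monomials $p(x)=x^n$, and the plan is to argue by induction on the degree $n$, letting $t$ range over $[0,T]$. Write $\sigma_t^2:=R(t,t)$; from $\1_{[0,0)}=0$ we get $X_0=0$. The ``only if'' part needs no induction: if \eqref{eq:ito-polynomial} holds, its right-hand side contains $\int_0^t p'(X_s)\,\delta X_s=\delta_T\big(p'(X_\cdot)\1_t\big)$, so $p'(X_\cdot)\1_t\in\mathrm{Dom}\,\delta_T$, and the choice $p(x)=x^2$ gives exactly $X_\cdot\1_t\in\mathrm{Dom}\,\delta_T$.

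For the ``if'' part, assume $X_\cdot\1_t\in\mathrm{Dom}\,\delta_T$ and prove by induction on $n$ the statement
\[
S(n):\quad X_\cdot^{\,n-1}\1_t\in\mathrm{Dom}\,\delta_T,\qquad \delta_T\big(X_\cdot^{\,n-1}\1_t\big)=\frac1n X_t^{\,n}-\frac{n-1}{2}\int_0^t X_s^{\,n-2}\,\d R(s,s),
\]
which is \eqref{eq:ito-polynomial} for $p(x)=x^n$. The base cases $n=1$ ($X_\cdot^0\1_t=\1_t$ is deterministic and $\delta_T(\1_t)=X_t$) and $n=2$ (membership is the hypothesis) are direct. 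For the step I would differentiate both sides and use the commutation relation $D_T\delta_T(u)=u+\delta_T(D_Tu)$ with $u=X_\cdot^{\,n-1}\1_t$: here $D_r u=(n-1)X_\cdot^{\,n-2}\1_{[r,t)}$, a process of the same shape one degree lower, so $S(n-1)$ applied at the times $t$ and $r$ evaluates $\delta_T(D_r u)$ by linearity, and a short computation identifies $D_T\delta_T(u)$ with $D_T$ of the right-hand side of $S(n)$. Hence $\delta_T(u)$ and that right-hand side differ by a deterministic constant; since a Skorohod integral is centred and, by $\E X_t^{2m}=(2m-1)!!\,\sigma_t^{2m}$ together with the chain rule $\int_0^t \sigma_s^{2m-2}\,\d R(s,s)=\tfrac1m\sigma_t^{2m}$, the right-hand side is also centred, the constant is zero and the induction closes. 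En route this also shows $p'(X_\cdot)\1_t\in\mathrm{Dom}\,\delta_T$ for every polynomial $p$.

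Equivalently one can approximate $X_\cdot^{\,n-1}\1_t$ by step processes $u^\pi=\sum_j X_{s_j}^{\,n-1}\1_{[s_j,s_{j+1})}$, compute $\delta_T(u^\pi)$ from the divergence product rule, and use the finite Taylor expansion of $x^n$ and the identity $(\Delta_j X)^2+2\big(R(s_j,s_{j+1})-R(s_j,s_j)\big)=\big(R(s_{j+1},s_{j+1})-R(s_j,s_j)\big)+\big((\Delta_j X)^2-\E(\Delta_j X)^2\big)$ to write $\delta_T(u^\pi)=\tfrac1n X_t^{\,n}$ minus a Riemann--Stieltjes sum approximating $\tfrac{n-1}{2}\int_0^t X_s^{\,n-2}\,\d R(s,s)$ minus remainders built from the centred squares $(\Delta_j X)^2-\E(\Delta_j X)^2$ and from $(\Delta_j X)^\ell$ with $\ell\ge3$; one then checks those remainders tend to $0$ in $L^2(\Omega)$ and invokes closedness of $\delta_T$. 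In either route $\int_0^t X_s^{\,n-2}\,\d R(s,s)$ is the Riemann--Stieltjes integral under the standing assumption that $s\mapsto R(s,s)$ has bounded variation, and is read through $\la\cdot,\cdot\ra_{\H_T}$ otherwise.

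The main obstacle, and the reason for the ``if and only if'', is the domain bookkeeping: at each stage one must know that $X_\cdot^{\,n-1}\1_t$ really lies in $\mathrm{Dom}\,\delta_T$, so that $\delta_T(X_\cdot^{\,n-1}\1_t)$ is defined and the commutation relation $D_T\delta_T=\mathrm{Id}+\delta_T D_T$ (or the closedness of $\delta_T$) may be applied. This is where the standing hypothesis $p(X_\cdot)\1_t\in L^2(\Omega;\H_T)$ enters — combined with the explicit form of $D_T(X_\cdot^{\,n-1}\1_t)$ and the boundedness of $R(\cdot,\cdot)$ on the diagonal implied by its bounded variation it gives $X_\cdot^{\,n-1}\1_t\in\mathbb{D}^{1,2}(\H_T)\subset\mathrm{Dom}\,\delta_T$ — and the genuine bottleneck is the degree-two case $X_\cdot\1_t\in\mathrm{Dom}\,\delta_T$. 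Everything else is an exact finite computation: $p$ being a polynomial, its Taylor expansion terminates, so no higher-order-variation remainder persists and no limiting argument is needed for the It\^o correction itself.
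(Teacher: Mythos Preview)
Your approach is correct but takes a genuinely different route from the paper's. Both argue by induction on polynomial degree, but the paper works with \emph{Hermite} polynomials and verifies the duality relation
\[
\E\langle D_T G,\, p'(X_\cdot)\1_t\rangle_{\H_T}=\E[Gp(X_t)]-\E[Gp(X_0)]-\tfrac12\int_0^t\E[Gp''(X_s)]\,\d R(s,s)
\]
directly against test variables $G=I_n^W(h^{\otimes n})$, using the Hermite recursions $H_{n+1}(x)=xH_n(x)-nH_{n-1}(x)$, $H_n'(x)=nH_{n-1}(x)$ together with the divergence product rule $\delta_T(Fu)=F\delta_T(u)-\langle D_TF,u\rangle_{\H_T}$ (Nualart, Proposition~1.3.3). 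You instead work with monomials and use the commutation identity $D_T\delta_T(u)=u+\delta_T(D_Tu)$: you match Malliavin derivatives via $S(n-1)$ applied at $t$ and $r$, and then kill the residual constant by the moment calculation $\E X_t^{2m}=(2m-1)!!\,\sigma_t^{2m}$ together with $\int_0^t\sigma_s^{2m-2}\,\d\sigma_s^2=\tfrac1m\sigma_t^{2m}$.

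What each buys: the paper's duality verification simultaneously establishes membership in $\mathrm{Dom}\,\delta_T$ and identifies the value of $\delta_T$, so no separate regularity argument is needed beyond the $L^2(\Omega;\H_T)$ standing hypothesis and the assumed $X_\cdot\1_t\in\mathrm{Dom}\,\delta_T$ base case. Your route via $D_T\delta_T$ requires knowing \emph{a priori} that $u=X_\cdot^{\,n-1}\1_t$ lies in a class (e.g.\ $\mathbb{D}^{1,2}(\H_T)$) where the commutation relation holds; you invoke the standing hypothesis plus boundedness of $R$ on the diagonal for this, but that argument is only sketched, and if it really delivers $\mathbb{D}^{1,2}(\H_T)\subset\mathrm{Dom}\,\delta_T$ for every $n$ then the ``if'' hypothesis is redundant --- which is in fact the paper's own message in the Remark following the proposition, though your phrasing ``the genuine bottleneck is the degree-two case'' sits awkwardly with that. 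Your alternative step-process approximation is also viable and closer in spirit to the paper's closedness-of-$\delta_T$ philosophy. Overall: sound strategy, different toolkit (commutation relation vs.\ duality + product rule; monomials vs.\ Hermite polynomials), with the paper's choice being slightly cleaner on the domain bookkeeping.
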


\begin{rem}
The message of the above result is that once the processes $p(X_\cdot)\1_t \in L^2(\Omega;\H_T)$, then they automatically belong to the domain of $\delta_T$ which is a subspace of $L^2(\Omega;\H_T)$. However, in order to check $p(X_\cdot)\1_t \in L^2(\Omega;\H_T)$ one needs more information on the Kernel $K_T$. A sufficient condition is provided in Corollary \ref{cor:amn-fredholm_extension} which covers many cases of interest.
\end{rem}
\begin{proof}
By definition and applying transfer principle, we have to prove that $p'(X_\cdot)\1_t$ belongs to domain of $\delta_T$ and that 
\begin{eqnarray}
\label{eq:delta_def-polynomial}
\lefteqn{\E \int_0^t D_s G K_T^*[p'(X_\cdot)\1_t]\,\d s}\\
&=&
\E\left[Gp(X_t)\right]-\E\left[Gp(X_0)\right]
-\frac{1}{2}\int_0^t \E\left[Gp''(X_t)\right]\,\d R(t,t) \nonumber  
\end{eqnarray}
for every random variable $G$ from a total subset of $L^2(\Omega)$. In other words, it is sufficient to show that \eqref{eq:delta_def} is valid for random variables of form $G=I_n^W(h^{\otimes n})$, where $h$ is a step function. 

Note first that it is sufficient to prove the claim only for Hermite polynomials $H_k,k=1,\ldots$. Indeed, it is well-known that any polynomial can be expressed as a linear combination of Hermite polynomials and consequently, the result for arbitrary polynomial $p$ follows by linearity.

We proceed by induction. First it is clear that first two polynomials $H_0$ and $H_1$ satisfies \eqref{eq:delta_def-polynomial}. Furthermore, by assumption $H'_2(X_\cdot)\1_t$ belongs to $\mathrm{Dom}\,\delta_T$ from which \eqref{eq:delta_def-polynomial} is easily deduced by \cite[Proposition 1.3.3]{nualart}. Assume next that the result is valid for Hermite polynomials $H_k,k=0,1,\ldots n$. Then, recall well-known recursion formulas
\begin{eqnarray*}
H_{n+1}(x) &=& xH_{n}(x) - n H_{n-1}(x), \\
H'_n(x) &=& nH_{n-1}(x). 
\end{eqnarray*}
The induction step follows with straightforward calculations by using the recursion formulas above and \cite[Proposition 1.3.3]{nualart}. We leave the details to the reader.
\end{proof}

We will now illustrate how the result can be generalized for functions satisfying the growth condition \eqref{eq:growth} by using Proposition \ref{pro:ito-skorohod-formula-polynomials}. 
First note that the growth condition \eqref{eq:growth} is indeed natural since it guarantees that the left side of \eqref{eq:ito} is square integrable. Consequently, since operator $\delta_T$ is a mapping from $L^2(\Omega;\H_T)$ into $L^2(\Omega)$, functions satisfying \eqref{eq:growth} are largest class of functions for which  \eqref{eq:ito} can hold. However, it is not clear in general whether $f'(X_\cdot)\1_t$ belongs to $\mathrm{Dom}\,\delta_T$. Indeed, for example in \cite{alos-mazet-nualart} the authors posed additional conditions on the Volterra kernel $K$ to guarantee this. As our main result we show that $\E\Vert f'(X_\cdot)\1_t\Vert^2_{\H_T}<\infty$ implies that \eqref{eq:ito} holds. In other words, the It\^o formula \eqref{eq:ito} is not only natural but it is also the only possibility. 

\begin{thm}[It\^o formula for Skorohod integrals]\label{thm:ito-skorohod-formula}
Let $X$ be a separable centered Gaussian process with covariance $R$ such that all the polynomials $p(X_\cdot)\1_t \in L^2(\Omega;\H_T)$
Assume that $f\in C^2$ satisfies growth condition \eqref{eq:growth} and that the variance of $X$ is bounded and of bounded variation. If 
\begin{equation}
\label{eq:hilbert_condition}
\E \Vert f'(X_\cdot)\1_t\Vert_{\H_T}^2 < \infty
\end{equation}
for any $t\in[0,T]$, then 
$$
f(X_t) = f(X_0) + \int_0^t f'(X_s)\,\delta X_s + 
\frac{1}{2} \int_0^t f''(X_s)\, \d R(s,s).
$$
\end{thm}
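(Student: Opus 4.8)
The plan is to reduce the statement to the polynomial It\^o formula of Proposition~\ref{pro:ito-skorohod-formula-polynomials} by approximating $f$ with polynomials, and then to pass to the limit using the closedness of the Skorohod operator $\delta_T$.

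First I would put $\sigma^2:=\sup_{0\le s\le T}\E X_s^2$, which is finite because the variance is bounded. The growth condition \eqref{eq:growth} together with $\lambda<\tfrac14\sigma^{-2}$ shows that $f,f',f''$ lie in $L^2(\R,\mu)$ with $\mu=N(0,\sigma^2)$, since $|f(x)|^2\le c^2\e^{2\lambda x^2}$ is $\mu$-integrable ($2\lambda<\tfrac12\sigma^{-2}$); the same bound gives $\sup_{s\le T}\E\,\e^{2\lambda X_s^2}<\infty$, so $f(X_s),f'(X_s),f''(X_s)$ are bounded in $L^2(\Omega)$ uniformly in $s$. Now let $p_n$ be the $n$-th partial sum of the expansion of $f$ in the Hermite basis of $L^2(\R,\mu)$; since differentiation acts on that basis by an index shift and $f',f''$ are in $L^2(\R,\mu)$ as well, one gets $p_n\to f$, $p_n'\to f'$, $p_n''\to f''$ in $L^2(\R,\mu)$, and the $p_n$ can be chosen so that in addition $p_n^{(j)}\to f^{(j)}$ pointwise, $j=0,1,2$. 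Because the law $N(0,\E X_s^2)$ of $X_s$ has a bounded density with respect to $\mu$ whenever $\E X_s^2>0$ (and $X_s\equiv 0$ when $\E X_s^2=0$), it follows that $p_n^{(j)}(X_s)\to f^{(j)}(X_s)$ in $L^2(\Omega)$ for every $s$ and $j=0,1,2$.

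Applying Proposition~\ref{pro:ito-skorohod-formula-polynomials} to each $p_n$ — its standing hypothesis, that $q(X_\cdot)\1_t\in L^2(\Omega;\H_T)$ for every polynomial $q$, which also forces $X_\cdot\1_t\in\mathrm{Dom}\,\delta_T$, is part of our assumptions — gives $p_n'(X_\cdot)\1_t\in\mathrm{Dom}\,\delta_T$ and
\[
\delta_T\bigl(p_n'(X_\cdot)\1_t\bigr)=p_n(X_t)-p_n(X_0)-\frac12\int_0^t p_n''(X_s)\,\d R(s,s).
\]
The right-hand side converges in $L^2(\Omega)$: the first two terms by the previous paragraph, and the integral term by dominated convergence against the finite total variation measure $|\d R(s,s)|$ on $[0,t]$ (the variance being of bounded variation), a majorant being provided by the uniform $L^2(\Omega)$-bound from \eqref{eq:growth}. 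On the other hand \eqref{eq:hilbert_condition} says precisely that $f'(X_\cdot)\1_t\in L^2(\Omega;\H_T)$, and the key step is that $p_n'(X_\cdot)\1_t\to f'(X_\cdot)\1_t$ in $L^2(\Omega;\H_T)$. Granting this, closedness of $\delta_T$ (it is the adjoint of the densely defined operator $D_T$) yields $f'(X_\cdot)\1_t\in\mathrm{Dom}\,\delta_T$ and
\[
\delta_T\bigl(f'(X_\cdot)\1_t\bigr)=f(X_t)-f(X_0)-\frac12\int_0^t f''(X_s)\,\d R(s,s),
\]
which is the asserted formula.

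The main obstacle is exactly this convergence $p_n'(X_\cdot)\1_t\to f'(X_\cdot)\1_t$ in $L^2(\Omega;\H_T)$. The norm $\|\cdot\|_{\H_T}$ is non-local — $\H_T$ may even contain distributions, cf.\ the remarks in Section~\ref{sect:preliminaries} — so convergence of $p_n'(X_s)$ in $L^2(\Omega)$ for each fixed $s$ is nowhere near enough. Assumption \eqref{eq:hilbert_condition} is tailored for this: it supplies the limiting object in $L^2(\Omega;\H_T)$, and, combined with the uniform Gaussian integrability coming from \eqref{eq:growth}, it is what lets one bound $\E\Vert(p_n'-p_m')(X_\cdot)\1_t\Vert_{\H_T}^2$ in terms of $\Vert p_n'-p_m'\Vert_{L^2(\R,\mu)}^2$ — through the transfer operator $K_T^*$, or directly through the identity $\E\Vert g(X_\cdot)\1_t\Vert_{\H_T}^2=\int_{[0,t]^2}\E[g(X_s)g(X_r)]\,R(\d s,\d r)$ when the covariance is of bounded variation — and so to conclude that $\bigl(p_n'(X_\cdot)\1_t\bigr)_n$ is Cauchy in $L^2(\Omega;\H_T)$ with the right limit. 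The remaining delicate point, the behaviour where $\E X_s^2$ is zero or small, is absorbed into these same equi-integrability estimates and the pointwise convergence of the $p_n^{(j)}$.
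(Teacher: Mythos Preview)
Your proposal follows essentially the same route as the paper: expand $f$ in Hermite polynomials, apply Proposition~\ref{pro:ito-skorohod-formula-polynomials} to the partial sums, and use assumption~\eqref{eq:hilbert_condition} to control the Hermite tail $f'_n(X_\cdot)\1_t$ in $L^2(\Omega;\H_T)$. The only cosmetic difference is that you package the limit via closedness of $\delta_T$, whereas the paper verifies the duality identity $\E\langle D_T G, f'(X_\cdot)\1_t\rangle_{\H_T}=\ldots$ directly for test variables $G=I_n^W(h^{\otimes n})$; both arguments rest on exactly the same estimate.
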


\begin{proof}
In this proof we assume, for notational simplicity and with no loss of generality, that $\sup_{0\leq s\leq T}R(s,s)=1$.

First it is clear that \eqref{eq:hilbert_condition} implies that $f'(X_\cdot)\1_t$ belongs to domain of $\delta_T$. Hence we only have to prove that
\begin{eqnarray*}
\label{eq:delta_def}
\lefteqn{\E {\la D_T G, f'(X_\cdot)\1_t\ra}_{\H_T}} \\
&=& \E[Gf(X_t)]-\E[Gf(X_0)] 
-\frac{1}{2}\int_0^t \E[Gf''(X_s)]\,\d R(s,s). \nonumber
\end{eqnarray*}
for every random variable $G=I_n^W(h^{\otimes n})$. 

Now, it is well-known that Hermite polynomials, when properly scaled, form an orthogonal system in $L^2(\R)$ when equipped with the Gaussian measure.  Now each $f$ satisfying the growth condition \eqref{eq:growth} have a series representation 
\begin{equation}
\label{eq:f_series}
f(x) = \sum_{k=0}^\infty \alpha_k H_k(x).
\end{equation}
Indeed, the growth condition \eqref{eq:growth} implies that
$$
\int_\R |f'(x)|^2e^{-\frac{x^2}{2\sup_{0\leq s\leq T}R(s,s)}}\d x < \infty.
$$

Furthermore, we have 
$$
f(X_s) = \sum_{k=0}^\infty \alpha_k H_k(X_s)
$$
where the series converge almost surely and in $L^2(\Omega)$, and similar conclusion is valid for derivatives $f'(X_s)$ and $f''(X_s)$. 

Then, by applying \eqref{eq:hilbert_condition} we obtain that for any $\epsilon>0$ there exists $N=N_\epsilon$ such that we have
$$
\E{\la D_T G, f'_n(X_\cdot)\1_t\ra}_{\H_T} < \epsilon,\quad n\geq N
$$
where 
$$
f'_n(X_s) = \sum_{k=n}^\infty \alpha_k H'_k(X_s).
$$
Consequently, for random variables of form $G=I_n^W(h^{\otimes n})$ we obtain, by choosing $N$ large enough and applying Proposition \ref{pro:ito-skorohod-formula-polynomials}, that
\begin{equation*}
\begin{split}
&\E[Gf(X_t)]-\E[Gf(X_0)] -\frac{1}{2}\int_0^t \E (Gf''(X_t))\,\d R(t,t) \\
&- \E {\la D_T G, f'(X_\cdot)\1_t\ra}_{\H_T}\\
&=\E {\la D_T G, f'_n(X_\cdot)\1_t\ra}_{\H_T}\\
&<\epsilon.
\end{split}
\end{equation*}
Now the left side does not depend on $n$ which concludes the proof.
\end{proof}

\begin{rem}
Note that actually it is sufficient to have
\begin{equation}
\label{eq:ito-iff}
\E{\la D_T G, f'(X_\cdot)\1_t\ra}_{\H_T} = \sum_{k=1}^\infty \alpha_k\E{\la D_T G, H'_k(X_\cdot)\1_t\ra}_{\H_T}
\end{equation}
from which the result follows by Proposition \ref{pro:ito-skorohod-formula-polynomials}. Furthermore, taking account growth condition \eqref{eq:growth} this is actually sufficient and necessary condition for formula \eqref{eq:ito} to hold. Consequently, our method can also be used to obtain It\^o formulas by considering \emph{extended} domain of $\delta_T$ (see \cite{kruk-russo} or \cite{lei-nualart}). This is the topic of Subsection \ref{subs:ed} below.
\end{rem}

\begin{exa}
It is known that if $X=B^H$ is a fractional Brownian motion with $H>\frac{1}{4}$, then $f'(X_\cdot)\1_t$ satisfies condition \eqref{eq:hilbert_condition} while for $H\leq \frac{1}{4}$ it does not (see \cite[Chapter 5]{nualart}). Consequently, a simple application of Theorem \ref{thm:ito-skorohod-formula} covers fractional Brownian motion with $H>\frac14$. For the case $H\leq \frac14$ one has to consider extended domain of $\delta_T$ which is proved in \cite{kruk-russo}. Consequently, in this case we have \eqref{eq:ito-iff} for any $F\in \mathcal{S}$.
\end{exa}
We end this section by illustrating the power of our method with the following simple corollary which is an extension of \cite[Theorem 1]{alos-mazet-nualart}. 
\begin{cor}
\label{cor:amn-fredholm_extension}
Let $X$ be a separable centered continuous Gaussian process with covariance $R$ that is bounded and such that the Fredholm kernel $K_T$ is of bounded variation and
$$
\int_0^T \left(\int_0^T \Vert X_t - X_s\Vert_{L^2(\Omega)} |K_T|(\d t,s)\right)^2\d s < \infty.
$$
Then for any $t\in[0,T]$ we have
$$
f(X_t) = f(X_0) + \int_0^t f'(X_s)\,\delta X_s + 
\frac{1}{2} \int_0^t f''(X_s)\, \d R(s,s).
$$
\end{cor}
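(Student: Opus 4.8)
The plan is to reduce Corollary~\ref{cor:amn-fredholm_extension} to Theorem~\ref{thm:ito-skorohod-formula} by checking its hypotheses, the only nontrivial one being the integrability condition \eqref{eq:hilbert_condition}, namely $\E\Vert f'(X_\cdot)\1_t\Vert_{\H_T}^2<\infty$, together with the blanket requirement that all polynomials $p(X_\cdot)\1_t$ belong to $L^2(\Omega;\H_T)$. Since $R$ is assumed bounded and $X$ continuous (hence separable), the variance is bounded; the growth condition \eqref{eq:growth} is part of the statement of Theorem~\ref{thm:ito-skorohod-formula} and is inherited by $f$ here, so the real work is to show that the stated bound on $K_T$ forces $f'(X_\cdot)\1_t \in L^2(\Omega;\H_T)$ and likewise for polynomials.

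First I would use the transfer principle (Theorem~\ref{thm:transfer-principle-wi}): for any process $u$ with paths in $\H_T$ one has $\Vert u\Vert_{\H_T}=\Vert K_T^* u\Vert_{L^2([0,T])}$, so it suffices to estimate $\E\Vert K_T^*[f'(X_\cdot)\1_t]\Vert_{L^2([0,T])}^2$. Because $K_T$ is of bounded variation in its first argument, the adjoint associated operator acts as $K_T^* g(s) = \int_0^T g(r)\,K_T(\d r,s)$ (the example following Remark~\ref{rem:adjoint}), and more precisely, applied to the integrand $g(r)=f'(X_r)\1_{[0,t)}(r)$, a discrete approximation together with the fact that $g$ is constant-free only through $f'(X_r)$ gives
$$
\big(K_T^*[f'(X_\cdot)\1_t]\big)(s) = \int_0^t \big(f'(X_r)-f'(X_{\cdot})\big)\,K_T(\d r,s) + (\text{boundary terms}),
$$
so that pointwise in $s$,
$$
\big|\big(K_T^*[f'(X_\cdot)\1_t]\big)(s)\big| \le \int_0^t |f'(X_r)-f'(X_{r'})|\,|K_T|(\d r,s),
$$
after a telescoping/Abel-summation argument that replaces $f'(X_r)$ differences by increments. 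Taking $L^2(\Omega)$-norms and using the growth condition \eqref{eq:growth} to bound $\Vert f'(X_r)-f'(X_{r'})\Vert_{L^2(\Omega)}$ by a constant times $\Vert X_r-X_{r'}\Vert_{L^2(\Omega)}$ (via the mean value theorem and Gaussian moment bounds, exactly as in Al\`os--Mazet--Nualart), one arrives at
$$
\E\Vert f'(X_\cdot)\1_t\Vert_{\H_T}^2 \le C\int_0^T\left(\int_0^T \Vert X_r-X_{r'}\Vert_{L^2(\Omega)}\,|K_T|(\d r,s)\right)^2 \d s < \infty
$$
by the hypothesis of the corollary. The same estimate with $f'$ replaced by any polynomial derivative $p'$ (and $p$) shows $p(X_\cdot)\1_t\in L^2(\Omega;\H_T)$, so the standing assumption of Theorem~\ref{thm:ito-skorohod-formula} is met as well.

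The main obstacle I anticipate is making the passage from the Wiener-integral transfer formula to the pointwise bound on $K_T^*[f'(X_\cdot)\1_t]$ fully rigorous: $f'(X_\cdot)\1_t$ is a random element of $\H_T$, not a deterministic step function, so one must approximate it by simple processes $\sum_j f'(X_{r_j})\1_{[r_{j-1},r_j)}$, apply $K_T^*$ termwise, recognize the result as a Riemann--Stieltjes sum against $K_T(\d r,s)$, and control the error using the bounded-variation property of $K_T$ and the $L^2(\Omega)$-continuity of $r\mapsto f'(X_r)$ (which in turn uses continuity of $X$ and the growth bound). Once that approximation is justified, the remaining estimates are routine. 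After \eqref{eq:hilbert_condition} and the polynomial condition are established, Theorem~\ref{thm:ito-skorohod-formula} applies verbatim and yields the claimed It\^o formula, completing the proof.
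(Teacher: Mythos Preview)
Your proposal is correct and follows exactly the paper's approach: the paper's proof simply observes that the stated assumption is the Fredholm analogue of condition \textbf{(K2)} in \cite{alos-mazet-nualart}, which implies \eqref{eq:hilbert_condition}, and then invokes Theorem~\ref{thm:ito-skorohod-formula}. You have spelled out the details that the paper delegates to \cite{alos-mazet-nualart}---the Abel-summation/bounded-variation computation of $K_T^*[f'(X_\cdot)\1_t]$ and the mean-value/Gaussian-moment bound $\Vert f'(X_r)-f'(X_s)\Vert_{L^2(\Omega)}\le C\Vert X_r-X_s\Vert_{L^2(\Omega)}$---so your argument is a fleshed-out version of the same proof; just be sure in a final write-up to make the reference point in your increment bound explicit (it should be $X_s$, matching the integral condition in the hypothesis), and to note that the boundary term $f'(X_s)K_T(t,s)$ is controlled directly since $K_T(t,\cdot)\in L^2([0,T])$ and $\sup_s\E|f'(X_s)|^2<\infty$.
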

\begin{proof}
Note that assumption is a Fredholm version of condition \textbf{(K2)} in \cite{alos-mazet-nualart} which implies condition \eqref{eq:hilbert_condition}. Hence the result follows by Theorem \ref{thm:ito-skorohod-formula}.
\end{proof}

%%%%%%%%%%%%%%%%%%%%%%%%%%%%%%%%%%%%%%%%%%%%%%%%%%%%%%%%%%%%%%%%%%%%%%%%%%%%%%%
\subsection{Extended divergence operator}\label{subs:ed}

As shown in Subsection \ref{subs:mc} the It\^o formula \eqref{eq:ito} is the only possibility. However, the problem is that the space $L^2(\Omega;\H_T)$ may be to small to contain the elements $f'(X_\cdot)\1_t$. In particular, it may happen that not even the process $X$ itself belong to $L^2(\Omega;\H_T)$ (see e.g. \cite{che-nua} for the case of fractional Brownian motion with $H\leq \frac14$). This problem can be overcome by considering an extended domain of $\delta_T$. The idea of extended domain is to extend the inner product $\la u,\varphi\ra_{\H_T}$ for simple $\varphi$ to more general processes $u$ and then define extended domain by \eqref{eq:domain_def} with a restricted class of test variables $F$. This also gives another intuitive reason why extended domain of $\delta_T$ can be useful; indeed, here we have proved that It\^o formula \eqref{eq:ito} is the only possibility, and what one essentially needs for such result is that 
\begin{enumerate}
\item
$X_\cdot\1_t$ belongs to $\mathrm{Dom}\,\delta_T$,

\item 
equation \eqref{eq:ito-iff} is valid for functions satisfying \eqref{eq:growth}.
\end{enumerate}
Consequently, one should look for extensions of operator $\delta_T$ such that these two things are satisfied.

To facility the extension of domain, we make the following relatively moderate assumption:

\bigskip

\noindent
\textbf{(H)}\quad The function $t\mapsto R(t,s)$ is of bounded variation on $[0,T]$ and
$$
\sup_{t\in[0,T]}\int_0^T |R|(\d s,t) < \infty. 
$$

\bigskip

\begin{rem}
Note that we are making the assumption on the covariance $R$, not the Kernel $K_T$. Hence our case is different from that of \cite{alos-mazet-nualart}.  Also, \cite{lei-nualart} assumed absolute continuity in $R$; we are satisfied with bounded variation.
\end{rem}

We will follow the idea from Lei and Nualart \cite{lei-nualart} and extend the inner product $\la\cdot,\cdot\ra_{\H_T}$ beyond $\H_T$. 

Consider a step function $\varphi$. Then, on the one hand, by the isometry property we have
$$
\la \varphi,\1_t\ra_{\H_T} = \int_0^T (K_T^*\varphi)(s) g_t(s)\,\d s,
$$
where $g_t(s) = K(t,s)\in L^2([0,T])$. On the other hand, by using adjoint property (see Remark \ref{rem:adjoint}) we obtain
$$
\int_0^T (K_T^* \varphi)(s) g_t(s)\, \d s = \int_0^T \varphi(s) \left(K_T g_t\right)(\d s),
$$
where, computing formally, we have 
\begin{eqnarray*}
\left(K_Tg_t\right)(\d s) 
&=& \int_0^T g_t(u) K_T(\d s,u)\d u\\
&=& \int_0^T K_T(t,u) K_T(\d s,u)\d u\\
&=& R(t, \d s).
\end{eqnarray*}
Consequently,
$$
\la \varphi,\1_t\ra_{\H_T} = \int_0^T \varphi(s) R(t,\d s).
$$
This gives motivation to the following definition similar to that of \cite[Definition 2.1]{lei-nualart}.

\begin{dfn}
Denote by $\mathcal{T}_T$ the space of measurable functions $g$ satisfying 
$$
\sup_{t\in[0,T]}\int_0^T |g(s)||R|(t,\d s) < \infty
$$
and let $\varphi$ be a step function of form $\varphi = \sum_{k=1}^n b_k \textbf{1}_{t_k}$. Then we extend $\la \cdot,\cdot\ra_{\H_T}$ to $\mathcal{T}_T$ by defining
$$
\la g,\varphi\ra_{\H_T} = \sum_{k=1}^n b_k \int_0^T g(s)R(t_k,\d s).
$$
In particular, this implies that for $g$ and $\varphi$ as above, we have
$$
\la g\1_t,\varphi\ra_{\H_T} = \int_0^t g(s)\,\d\la \1_s,\varphi\ra_{\H_T}.
$$
\end{dfn}

We define extended domain Dom$^E$ $\delta_T$ similarly as in \cite{lei-nualart}.

\begin{dfn}
A process $u\in L^1(\Omega; \mathcal{T}_T)$ belongs to  $\mathrm{Dom}^E\,\delta_T$ if 
$$
|\E\la u, DF\ra_{\H_T}| \leq c_u \Vert F\Vert_2
$$ 
for any smooth random variable $F\in \mathcal{S}$. In this case, $\delta(u)\in L^2(\Omega)$ is defined by duality relationship
$$
\E[F\delta(u)] = \E\la u,DF\ra_{\H_T}.
$$
\end{dfn}

\begin{rem}
Note that in general $\mathrm{Dom}\,\delta_T$ and $\mathrm{Dom}^E\, \delta_T$
are not comparable. See \cite{lei-nualart} for discussion.
\end{rem}

Note now that if a function $f$ satisfies the growth condition \eqref{eq:growth}, then $f'(X_\cdot)\1_t \in L^1(\Omega;\mathcal{T}_T)$ since \eqref{eq:growth} implies
$$
\E \sup_{t\in[0,T]}|f'(X_t)|^p < \infty
$$
for any $p< \frac{1}{2\lambda}\left(\sup_{t\in[0,T]} R(t,t)\right)^{-1}$. Consequently, with this definition we are able the get rid of the problem that processes might not belong to corresponding $\H_T$-spaces. Furthermore, this implies that the series expansion \eqref{eq:f_series} converges in the norm $L^1(\Omega;\mathcal{T}_T)$ defined by
$$
\E \int_0^T |u(s)||R|(t,\d s)
$$
which in turn implies \eqref{eq:ito-iff}. 
Hence it is straightforward to obtain the following by first showing the result for polynomials and then by approximating in a similar manner as done in the previous Subsection \ref{subs:mc}, but using the extended domain instead.

\begin{thm}[It\^o formula for extended Skorohod integrals]\label{thm:ito-skorohod-formula-extended}\label{thm:ito-extended}
Let $X$ be a separable centered Gaussian process with covariance $R$ and assume that $f\in C^2$ satisfies growth condition \eqref{eq:growth}. Furthermore, assume that \textbf{(H)} holds and that the variance of $X$ is bounded and of bounded variation. Then for any $t\in[0,T]$ the process $f'(X_\cdot)\1_t$ belongs to Dom$^E$ $\delta_T$ and 
$$
f(X_t) = f(X_0) + \int_0^t f'(X_s)\,\delta X_s + 
\frac{1}{2} \int_0^t f''(X_s)\, \d R(s,s).
$$
\end{thm}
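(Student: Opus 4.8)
The plan is to mimic the two-step strategy behind Theorem~\ref{thm:ito-skorohod-formula}, replacing the ordinary divergence by the extended one $\delta_T$ of Subsection~\ref{subs:ed}: first prove the formula for polynomials, then pass to general $f$ obeying \eqref{eq:growth} by a Hermite series argument. The whole point of working in $\mathrm{Dom}^E\,\delta_T$ is that the extended pairing $\la\cdot,\cdot\ra_{\H_T}$ on $\mathcal{T}_T$ and the ambient space $L^1(\Omega;\mathcal{T}_T)$ are large enough to contain all the processes $p(X_\cdot)\1_t$ and $f'(X_\cdot)\1_t$, which is exactly what could fail in the $L^2(\Omega;\H_T)$-setting, so the obstruction of Theorem~\ref{thm:ito-skorohod-formula} disappears.

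\emph{Step 1 (polynomials).} By linearity it suffices to treat Hermite polynomials $H_n$. First I would record that under \textbf{(H)} and boundedness of the variance every polynomial satisfies $p(X_\cdot)\1_t\in L^1(\Omega;\mathcal{T}_T)$: indeed $\E\sup_{0\le s\le T}|p(X_s)|<\infty$ since $\sup_s\E X_s^2<\infty$, and $\sup_t\int_0^T|R|(t,\d s)<\infty$ by \textbf{(H)}, so $\E\int_0^T|p(X_s)|\,|R|(t,\d s)<\infty$. Then I would run the induction on $n$ exactly as in the proof of Proposition~\ref{pro:ito-skorohod-formula-polynomials}, testing against $G=I_m^W(h^{\otimes m})$ and using the extended duality together with the recursions $H_{n+1}(x)=xH_n(x)-nH_{n-1}(x)$ and $H_n'(x)=nH_{n-1}(x)$, to get $H_n'(X_\cdot)\1_t\in\mathrm{Dom}^E\,\delta_T$ and
\[
H_n(X_t)=H_n(X_0)+\int_0^t H_n'(X_s)\,\delta X_s+\tfrac12\int_0^t H_n''(X_s)\,\d R(s,s).
\]
The cases $n=0,1,2$ are direct; for $n=2$ one needs $X_\cdot\1_t\in\mathrm{Dom}^E\,\delta_T$, which now follows from the definition of the extended domain (and the computation $(K_Tg_t)(\d s)=R(t,\d s)$ preceding that definition) rather than from a hypothesis, and the inductive step is the same computation via \cite[Proposition~1.3.3]{nualart} with $\la\cdot,\cdot\ra_{\H_T}$ read in the extended sense on $\mathcal{T}_T$.

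\emph{Step 2 (general $f$).} Expand $f=\sum_{k\ge0}\alpha_kH_k$ in Hermite polynomials; as in the proof of Theorem~\ref{thm:ito-skorohod-formula}, \eqref{eq:growth} gives $\int_\R|f'(x)|^2 e^{-x^2/(2\sup_{0\le s\le T}R(s,s))}\,\d x<\infty$, and the partial sums of $f$, $f'$, $f''$ evaluated at $X_s$ converge a.s.\ and in $L^2(\Omega)$. The new ingredient is to upgrade the tail $f_n'(X_\cdot)\1_t:=\sum_{k\ge n}\alpha_kH_k'(X_\cdot)\1_t$ to $0$ in $L^1(\Omega;\mathcal{T}_T)$: by Tonelli,
\[
\sup_{t\in[0,T]}\E\!\int_0^T|f_n'(X_s)|\,|R|(t,\d s)=\sup_{t}\int_0^T\E|f_n'(X_s)|\,|R|(t,\d s)\le\Big(\sup_t\int_0^T|R|(t,\d s)\Big)\sup_{0\le s\le T}\E|f_n'(X_s)|,
\]
and the last factor tends to $0$ by the $L^2(\Omega)$-tail bound. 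Then the duality from Step~1 passes to the limit: for $G=I_m^W(h^{\otimes m})$ one gets $\E\la D_TG, f_n'(X_\cdot)\1_t\ra_{\H_T}\to0$, hence
\[
\E\la D_TG, f'(X_\cdot)\1_t\ra_{\H_T}=\sum_{k\ge1}\alpha_k\,\E\la D_TG,H_k'(X_\cdot)\1_t\ra_{\H_T},
\]
which is \eqref{eq:ito-iff}; since such $G$ are total in $L^2(\Omega)$ and $f(X_t)$ (resp.\ $\tfrac12\int_0^t f''(X_s)\,\d R(s,s)$) is the $L^2$- (resp.\ $L^1$-) limit of the corresponding polynomial expressions, it follows that $f'(X_\cdot)\1_t\in\mathrm{Dom}^E\,\delta_T$ with the asserted identity.

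\emph{Main obstacle.} The delicate point is the uniform-in-$s$ control $\sup_{0\le s\le T}\E|f_n'(X_s)|\to0$ in Step~2: the variance $R(s,s)$ of $X_s$ may depend on $s$ and only $R(s,s)\le1$ is available, so one must argue that the Hermite tail in the worst-case Gaussian weight $e^{-x^2/2}$ dominates $\E|f_n'(X_s)|$ for all $s$ simultaneously — this is precisely what the calibration $\lambda<\tfrac14(\sup_sR(s,s))^{-1}$ in \eqref{eq:growth} is designed for. A secondary technical check is that the extended pairing used in Step~1 is the legitimate one, i.e.\ that for $u=H_n'(X_\cdot)\1_t\in L^1(\Omega;\mathcal{T}_T)$ the $\mathcal{T}_T$-extended inner product $\la u,D_TG\ra_{\H_T}$ coincides with the genuine $\H_T$-inner product whenever the latter is defined, so that the induction and the limiting argument actually close; this is where \textbf{(H)} and the adjoint relation of Remark~\ref{rem:adjoint} are used exactly as in the motivation preceding the definition of $\mathcal{T}_T$.
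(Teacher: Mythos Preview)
Your proposal is correct and follows essentially the same approach as the paper: the paper's proof is a one-line remark that one ``first show[s] the result for polynomials and then approximat[es] in a similar manner as done in the previous Subsection~\ref{subs:mc}, but using the extended domain instead,'' and you have faithfully fleshed out precisely this two-step program (polynomial/Hermite induction, then Hermite series tail control in $L^1(\Omega;\mathcal{T}_T)$ to obtain \eqref{eq:ito-iff}). Your identification of the uniform-in-$s$ tail control and of the automatic membership $X_\cdot\1_t\in\mathrm{Dom}^E\,\delta_T$ are exactly the points the paper leaves implicit.
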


\begin{rem}
As an application of Theorem \ref{thm:ito-extended} it is straightforward to derive version of It\^o--Tanaka formula under additional conditions which guarantee that for a certain sequence of functions $f_n$ we have the convergence of term $\frac{1}{2}\int_0^t f_n''(X_s)\,\d R(s,s)$ to the local time. For details we refer to \cite{lei-nualart}, where authors derived such formula under their assumptions. 
\end{rem}

Finally, let us note that the extension to functions $f(t,x)$ is straightforward, where $f$ satisfies the following growth condition.
\begin{equation}
\label{eq:growth2}
\max\left[|f(t,x)|,|\partial_t f(t,x)|,|\partial_x f(t,x)|,|\partial_{xx} f(t,x)|\right] \le ce^{\lambda |x|^2}
\end{equation}
for some $c>0$ and $\lambda < \frac{1}{4}\left(\sup_{0\leq s \leq T}\E X_s^2\right)^{-1}$.

\begin{thm}[It\^o formula for extended Skorohod integrals, II]\label{thm:ito-skorohod-formula-extended2}
Let $X$ be a separable centered Gaussian process with covariance $R$ and assume that $f\in C^{1,2}$ satisfies growth condition \eqref{eq:growth2}. Furthermore, assume that \textbf{(H)} holds and that the variance of $X$ is bounded and of bounded variation. 
Then for any $t\in[0,T]$ the process $\partial_x f(\cdot,X_\cdot)\1_t$ belongs to Dom$^E$ $\delta_T$ and 
\begin{eqnarray*}
f(t,X_t) &=& f(0,X_0) + \int_0^t \partial_x f(s,X_s)\,\delta X_s + 
\int_0^t \partial_t f(s,X_s)\d s \\
& &+\frac{1}{2} \int_0^t \partial_{xx} f(s,X_s)\, \d R(s,s).
\end{eqnarray*}
\end{thm}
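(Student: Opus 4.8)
The plan is to repeat the proof of Theorem~\ref{thm:ito-skorohod-formula-extended} almost verbatim; the one genuinely new ingredient is the drift term $\int_0^t\partial_t f(s,X_s)\,\d s$ produced by the explicit time dependence of $f$. First I would note that the growth condition \eqref{eq:growth2} plays exactly the role that \eqref{eq:growth} played in the time-homogeneous case: as in the discussion preceding Theorem~\ref{thm:ito-skorohod-formula-extended} it implies $\partial_x f(\cdot,X_\cdot)\1_t\in L^1(\Omega;\mathcal{T}_T)$, and, together with the assumption that the variance is bounded and of bounded variation, that the drift integral and the $\d R$-integral on the right-hand side are well-defined square-integrable random variables. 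Thus, exactly as in the proof of Theorem~\ref{thm:ito-skorohod-formula}, everything reduces to checking that $\partial_x f(\cdot,X_\cdot)\1_t\in\mathrm{Dom}^E\,\delta_T$ and that
\begin{align*}
\E\la D_T G,\partial_x f(\cdot,X_\cdot)\1_t\ra_{\H_T}
&=\E[Gf(t,X_t)]-\E[Gf(0,X_0)]\\
&\quad-\E\!\int_0^t G\,\partial_t f(s,X_s)\,\d s-\tfrac12\int_0^t\E[G\,\partial_{xx}f(s,X_s)]\,\d R(s,s)
\end{align*}
for $G$ in the total set $\{I_n^W(h^{\otimes n}):h\text{ a step function}\}$ of $L^2(\Omega)$.

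Second, I would reduce to polynomials, just as for \eqref{eq:f_series}. Expanding $f$ in Hermite polynomials in the spatial variable with $C^1$ coefficients, $f(t,x)=\sum_{k\ge0}\alpha_k(t)H_k(x)$, and approximating each $\alpha_k$ and $\alpha_k'$ uniformly on $[0,T]$ by polynomials in $t$, the growth condition \eqref{eq:growth2} guarantees that the partial sums, and the analogous series for $\partial_t f$, $\partial_x f$ and $\partial_{xx}f$, converge in the norm $\E\int_0^T|\cdot(s)|\,|R|(t,\d s)$ of $L^1(\Omega;\mathcal{T}_T)$, uniformly in $t\in[0,T]$. Since $\mathrm{Dom}^E\,\delta_T$ is closed under this mode of convergence — it is defined by a duality inequality — it suffices to establish the formula for the monomials $f(t,x)=t^jH_k(x)$ and then pass to linear combinations and limits.

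Third, for $f(t,x)=t^jH_k(x)$ I would combine Proposition~\ref{pro:ito-skorohod-formula-polynomials} — in the extended-domain form underlying Theorem~\ref{thm:ito-skorohod-formula-extended} — with an integration by parts in the time variable. Applying the time-homogeneous It\^o formula to $H_k$ and pairing against $G$ represents $u\mapsto\E[G\,H_k(X_u)]$ as $\E[G\,H_k(X_0)]$ plus an indefinite Lebesgue--Stieltjes integral on $[0,u]$; here assumption \textbf{(H)} is exactly what makes $s\mapsto\la\1_s,D_T G\ra_{\H_T}$ of bounded variation, hence this representation legitimate. Multiplying by $t^j$ and integrating by parts in $t$ splits $\E[G\,t^jH_k(X_t)]$ into the drift term $\E\int_0^t G\,js^{j-1}H_k(X_s)\,\d s$, the second-order term $\tfrac12\int_0^t\E[G\,s^jH_k''(X_s)]\,\d R(s,s)$, and a remaining integral which, by the defining property $\la g\1_t,\varphi\ra_{\H_T}=\int_0^t g(s)\,\d\la\1_s,\varphi\ra_{\H_T}$ of the extended pairing on $\mathcal{T}_T$, equals $\E\la D_T G,s^jH_k'(X_\cdot)\1_t\ra_{\H_T}$. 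Since $\partial_t f=js^{j-1}H_k$, $\partial_{xx}f=s^jH_k''$ and $\partial_x f=s^jH_k'$, this is precisely the desired duality identity; the same manipulation yields $s^jH_k'(X_\cdot)\1_t\in\mathrm{Dom}^E\,\delta_T$, inherited from $H_k'(X_\cdot)\1_t$ because $s^j$ is bounded on $[0,T]$. Passing from monomials to general $f$ by linearity and the $L^1(\Omega;\mathcal{T}_T)$-convergence of the second step then finishes the proof, exactly as at the end of the proof of Theorem~\ref{thm:ito-skorohod-formula}.

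I expect the main obstacle to be bookkeeping rather than a single hard estimate: one must check that $u\mapsto\E[G\,H_k(X_u)]$ is genuinely an indefinite Stieltjes integral, so that the deterministic integration by parts in $t$ is valid, and that multiplication by the $C^1$ weight $s^j$ commutes with the extended divergence — both of which hinge on assumption \textbf{(H)} and on the defining identity for the pairing $\la\cdot,\cdot\ra_{\H_T}$ on $\mathcal{T}_T$. With these points settled, the argument is a routine repetition of the time-homogeneous case.
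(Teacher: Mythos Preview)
Your proposal is correct and follows essentially the same route as the paper: the paper's proof consists of a single sentence, namely that the formula ``follows by approximating with polynomials of form $p(x)q(t)$ and following the proof of Theorem~\ref{thm:ito-skorohod-formula}'', and your monomials $t^jH_k(x)$ are precisely such products. Your integration-by-parts step in $t$, reducing the product case to the time-homogeneous Proposition~\ref{pro:ito-skorohod-formula-polynomials}/Theorem~\ref{thm:ito-skorohod-formula-extended}, is exactly the natural way to carry out what the paper leaves implicit, so there is no genuine difference in approach.
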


\begin{proof}
Taking into account that we have no problems concerning processes to belong to the required spaces, the formula follows by approximating with polynomials of form $p(x)q(t)$ and following the proof of theorem \ref{thm:ito-skorohod-formula}.
\end{proof}

%%%%%%%%%%%%%%%%%%%%%%%%%%%%%%%%%%%%%%%%%%%%%%%%%%%%%%%%%%%%%%%%%%%%%%%%%%%%%%%
%%%%%%%%%%%%%%%%%%%%%%%%%%%%%%%%%%%%%%%%%%%%%%%%%%%%%%%%%%%%%%%%%%%%%%%%%%%%%%%
%%%%%%%%%%%%%%%%%%%%%%%%%%%%%%%%%%%%%%%%%%%%%%%%%%%%%%%%%%%%%%%%%%%%%%%%%%%%%%%

\section{Applications}\label{sect:appl} 

We illustrate how some results transfer easily from the Brownian case to the Gaussian Fredholm processes. 

%%%%%%%%%%%%%%%%%%%%%%%%%%%%%%%%%%%%%%%%%%%%%%%%%%%%%%%%%%%%%%%%%%%%%%%%%%%%%%%
\subsection{Equivalence in Law}\label{subs:el}

The transfer principle has already been used in connection with the equivalence of law of Gaussian processes in e.g. \cite{sottinen} in the context of fractional Brownian motions and in \cite{baudoin-nualart} in the context of Gaussian Volterra processes satisfying certain non-degeneracy conditions.  The following proposition uses the Fredholm representation \eqref{eq:fredholm} to give a sufficient condition for the equivalence of general Gaussian processes in terms of their Fredholm kernels.

\begin{pro}\label{pro:equivalence}
Let $X$ and $\tilde X$ be two Gaussian process with Fredholm kernels $K_T$ and $\tilde K_T$, respectively.  If there exists a Volterra kernel $\ell\in L^2([0,T]^2)$ such that
\begin{equation}\label{eq:equivalence}
\tilde K_T(t,s) = K_T(t,s) -
\int_s^T K_T(t,u)\ell(u,s)\, \d u, 
\end{equation}
then $X$ and $\tilde X$ are equivalent in law. 
\end{pro}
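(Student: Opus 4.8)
The plan is to exploit the Fredholm representations $X_t = \int_0^T K_T(t,s)\,\d W_s$ and $\tilde X_t = \int_0^T \tilde K_T(t,s)\,\d W_s$ and reduce the equivalence in law of $X$ and $\tilde X$ to the equivalence in law of two Brownian-driven processes on $[0,T]$. The key observation is that relation \eqref{eq:equivalence} says precisely that, at the level of associated operators on $L^2([0,T])$, $\tilde K_T = K_T(I - L)$ where $L$ is the (Volterra) integral operator with kernel $\ell$. Since $\ell \in L^2([0,T]^2)$ is a Volterra kernel, the operator $I - L$ is invertible on $L^2([0,T])$ — its inverse is again of the form $I - \tilde L$ for a Volterra kernel $\tilde L$, obtained via the Neumann series $\sum_{n\ge 0} L^n$, which converges because iterated Volterra kernels have factorially decaying $L^2$-norms. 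Thus $K_T$ and $\tilde K_T$ have the same range considerations, and more importantly $\tilde X$ is obtained from $X$'s driving noise by a Girsanov-type perturbation.

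The cleanest route I would take is via the Hitsuda / Shepp characterization of processes equivalent to Brownian motion, or more directly: define a new process $\tilde W_t := W_t - \int_0^t \left(\int_0^T \ell(u,s)\,\d W_u\right)\1_{[0,t]}(s)\,\ldots$ — more carefully, set $\tilde W$ to be the Brownian motion obtained by the Girsanov transform associated with the Volterra kernel $\ell$, namely $\d\tilde W_s = \d W_s - \left(\int_s^T \ell(u,s)\,\d W_u\right)\d s$ is not quite right since the drift must be adapted; instead one uses that $g(s) := \int_0^s \ell(s,u)\,\d W_u$ (a Volterra kernel integrated against $W$) is adapted, and the Girsanov theorem applied to the drift $-g$ produces a measure $\mathbb{Q} \sim \mathbb{P}$ under which $W_s + \int_0^s g(r)\,\d r$ is a Brownian motion. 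The point is that under $\mathbb{Q}$ the process $\int_0^T K_T(t,s)\,\d W_s$ has the same law as $\tilde X$ has under $\mathbb{P}$, because substituting the relation between the two Brownian motions into the Wiener integral against $K_T$ and integrating by parts (Fubini for stochastic integrals, legitimate since $\ell$ is Volterra and square-integrable) reproduces exactly the kernel $\tilde K_T$ of \eqref{eq:equivalence}. Hence $\mathrm{Law}(\tilde X) \ll \mathrm{Law}(X)$, and by symmetry — using the inverse Volterra kernel $\tilde L$ to go back — also $\mathrm{Law}(X) \ll \mathrm{Law}(\tilde X)$, giving equivalence.

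The main obstacle I anticipate is making the Girsanov / integration-by-parts step rigorous when $X$ need not be adapted to $W$ and $K_T$ is merely Hilbert–Schmidt, not Volterra: one cannot simply invoke the classical Volterra-kernel Girsanov results of \cite{alos-mazet-nualart} or \cite{sottinen} directly. The right framework is the abstract Cameron–Martin / Feldman–Hájek theorem: $\mathrm{Law}(X)$ and $\mathrm{Law}(\tilde X)$ on $C([0,T])$ (or the appropriate path space) are equivalent iff the covariance operators are related by $R_{\tilde X} = A R_X A^*$ with $A = I + (\text{Hilbert–Schmidt})$ in the right sense and the Cameron–Martin spaces coincide. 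Relation \eqref{eq:equivalence} gives $\tilde K_T = K_T(I-L)$, hence $\tilde R = \tilde K_T \tilde K_T^* = K_T (I-L)(I-L)^* K_T^*$, and since $(I-L)(I-L)^* = I - (L + L^* - LL^*)$ with $L + L^* - LL^*$ Hilbert–Schmidt (indeed trace class, being built from $L^2$ Volterra kernels), one checks the Feldman–Hájek criterion: the two Gaussian measures are equivalent provided the perturbation operator is Hilbert–Schmidt and does not create a direction outside the common Cameron–Martin space, which the Volterra (hence "triangular", non-degenerate) structure of $\ell$ guarantees. I would carry out the steps in this order: (1) rewrite \eqref{eq:equivalence} as the operator identity $\tilde K_T = K_T(I-L)$; (2) show $I-L$ is boundedly invertible with $(I-L)^{-1} = I - \tilde L$, $\tilde L$ a Volterra Hilbert–Schmidt operator; (3) deduce $\tilde R = K_T(I-L)(I-L)^*K_T^*$ and identify the perturbation as trace class; (4) invoke Feldman–Hájek (equivalently, construct the explicit likelihood ratio via Girsanov in the Volterra case and transfer) to conclude equivalence. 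Step (4), reconciling the abstract criterion with the concrete kernel identity, is where the real work lies.
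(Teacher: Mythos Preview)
Your first route---Hitsuda plus a Fubini computation---is exactly the paper's proof, and the obstacle you anticipate does not exist. The paper simply quotes Hitsuda's theorem to assert that $\tilde W_t := W_t - \int_0^t\!\int_0^s \ell(s,u)\,\d W_u\,\d s$ has law equivalent to Wiener measure, then checks by stochastic Fubini (valid because all kernels lie in $L^2([0,T]^2)$) that the kernel identity \eqref{eq:equivalence} is precisely the statement $\int_0^T \tilde K_T(t,s)\,\d W_s = \int_0^T K_T(t,s)\,\d\tilde W_s$. Hence $X = \Phi(W)$ and $\tilde X \stackrel{d}{=} \Phi(\tilde W)$ for the \emph{same} deterministic map $\Phi\colon w\mapsto \bigl(\int_0^T K_T(t,s)\,\d w_s\bigr)_{t\in[0,T]}$, and equivalence of the laws of $X$ and $\tilde X$ follows from equivalence of the laws of $W$ and $\tilde W$ by push-forward (absolute continuity is preserved under measurable images). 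The point you are overthinking: Girsanov/Hitsuda is applied only at the Brownian level, where adaptedness is automatic; the Fredholm kernel $K_T$ enters only through a static measurable map of the entire Brownian path, so no Volterra structure on $K_T$ and no adaptedness of $X$ to the filtration of $W$ is ever needed. The symmetry argument via the inverse Volterra kernel is likewise unnecessary, since Hitsuda already gives two-sided equivalence of $W$ and $\tilde W$.

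Your Feldman--H\'ajek route is a genuine alternative and would also succeed, but it is more work for less: verifying the criterion requires tracking how the perturbation $(I-L)(I-L)^*-I$ interacts with the Cameron--Martin space $K_T L^2([0,T])$ and with possible degeneracies of $K_T$, whereas the paper's argument sidesteps all of this by establishing equivalence one level up, at $W$, and pushing forward. The Hitsuda citation already packages the Volterra/spectral content you would otherwise have to reprove by hand.
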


\begin{proof}
Recall that by the Hitsuda representation theorem \cite[Theorem 6.3]{hida-hitsuda} a centered Gaussian process $\tilde W$ is equivalent in law to a Brownian motion  on $[0,T]$ if and only if there exists a kernel $\ell\in L^2([0,T]^2)$ and a Brownian motion $W$ such that $\tilde W$ admits the representation
\begin{equation}\label{eq:hitsuda}
\tilde W_t = W_t - \int_0^t\!\int_0^s \ell(s,u)\, \d W_u \d s.
\end{equation}

Let $X$ have the Fredholm representations 
\begin{equation}\label{eq:X-fredholm}
X_t = \int_0^T K_T(t,s)\, \d W_s. 
\end{equation}
Then $\tilde X$ is equivalent to $X$ if it admits, in law, the representation
\begin{equation}\label{eq:tildeX-equivalence}
\tilde X_t \stackrel{d}{=} \int_0^T K_T(t,s)\, \d\tilde W_s,
\end{equation}
where $\tilde W$ is connected to $W$ of \eqref{eq:X-fredholm} by \eqref{eq:hitsuda}. 

In order to show \eqref{eq:tildeX-equivalence}, let 
$$
\tilde X_t = \int_0^T \tilde K_T(t,s)\, \d W_t'
$$
be the Fredholm representation of $\tilde X$. Here $W'$ is some Brownian motion. Then, by using the connection \eqref{eq:equivalence} and the Fubini theorem, we obtain
\begin{eqnarray*}
\tilde X_t &=& \int_0^T \tilde K_T(t,s)\, \d W_s' \\
&\stackrel{d}{=}& \int_0^T \tilde K_T(t,s)\, \d W_s \\
&=&
\int_0^T \left(K_T(t,s) -
\int_s^T K_T(t,u)\ell(u,s)\, \d u\right)\, \d W_s \\
&=&
\int_0^T K_T(t,s)\, \d W_s - \int_0^T\!\int_s^T K_T(t,u)\ell(u,s)\,\d u\,\d W_s \\
&=&
\int_0^T K_T(t,s)\, \d W_s - \int_0^T\!\int_0^s K_T(t,s)\ell(s,u)\, \d W_u\,\d s \\
&=&
\int_0^T K_T(t,s)\, \d W_s - \int_0^T K_T(t,s)\left(\int_0^s \ell(s,u)\, \d W_u\right)\,\d s \\
&=&
\int_0^T K_T(t,s)\left(\d W_s -\int_0^s\ell(s,u)\,\d W_u \,\d s\right)\\
&=&
\int_0^T K_T(t,s)\, \d\tilde W_s.
\end{eqnarray*}
Thus, we have shown the representation \eqref{eq:tildeX-equivalence}, and consequently the equivalence of $\tilde X$ and $X$.
\end{proof}

%%%%%%%%%%%%%%%%%%%%%%%%%%%%%%%%%%%%%%%%%%%%%%%%%%%%%%%%%%%%%%%%%%%%%%%%%%%%%%%
\subsection{Generalized Bridges}\label{subs:bridges}

We consider the conditioning, or bridging, of $X$ on $N$ linear functionals $\G_T=[G_T^i]_{i=1}^N$ of its paths: 
$$%\begin{equation}\label{eq:awints}
\G_T(X) = \int_0^T \g(t)\, \d X_t
= \left[\int_0^T g_i(t)\, \d X_t\right]_{i=1}^N.
$$%\end{equation} 
We assume, without any loss of generality, that the functions $g_i$ are 
linearly independent.  Also, without loss of generality we assume that $X_0=0$, and the conditioning is on the set $\{\int_0^T \g(t)\,\d X_t=\0\}$ instead of the apparently more general conditioning on the set $\{\int_0^T \g(t)\,\d X_t=\y\}$.  Indeed, see \cite{sy} how to obtain the more general conditioning from this one.

The rigorous definition of a bridge is the following.
 
\begin{dfn}\label{dfn:bridges}
The \emph{generalized bridge measure} $\P^{\g}$ is the regular conditional law
$$
\P^{\g} =
\P^{\g}\left[X\in \, \cdot\, \right] = 
\P\left[X\in \ \cdot\ \bigg| \int_0^T \g(t)\, \d X_t=\0\right].
$$
A \emph{representation of the generalized Gaussian bridge} 
is any process $X^{\g}$ satisfying
$$
\P\left[X^{\g}\in\,\cdot\,\right] =
\P^{\g}\left[X\in \, \cdot\, \right]
=
\P\left[ X \in\ \cdot\  \bigg| \int_0^T \g(t)\, \d X_t=\0\right].
$$
\end{dfn}  

We refer to \cite{sy} for more details on generalized Gaussian bridges. 

There are many different representations for bridges. A very general representation is the so-called \emph{orthogonal representation} given by
$$
X^{\g}_t =  X_t - 
{\lla \1_t,\g\rra}^\top{\lla \g\rra}^{-1}
\int_0^T \g(u)\, \d X_u,
$$
where, by the transfer principle,
\begin{eqnarray*}
{\lla \g \rra}_{ij} 
&:=& 
\Cov\left[\int_0^T g_i(t)\, \d X_t \,,\, \int_0^T g_j(t)\, \d X_t\right]\\
&=&
\int_0^T K_T^* g_i(t)\, K_T^* g_j(t)\, \d t. 
\end{eqnarray*}
A more interesting representation is the so-called \emph{canonical representation} where the filtrations of the bridge and the original process coincide.  In \cite{sy} such representations were constructed for the so-called prediction-invertible Gaussian processes.  In this subsection we show how the transfer principle can be used to construct a canonical-type bridge representation for all Gaussian Fredholm processes.  We start with an example that should make it clear how one uses the transfer principle.

\begin{exa}\label{exa:bb-canonical}
We construct a canonical-type representation for $X^1$, the bridge of $X$ conditioned on $X_T=0$. Assume $X_0=0$. Now, by the Fredholm representation of $X$ we can write the conditioning as
\begin{equation}\label{eq:bb-transfer}
X_T=\int_0^T 1 \, \d X_t = \int_0^T K_T(T,t)\, \d W_t = 0.
\end{equation}
Let us then denote by $W^{1^*}$ the canonical representation of the Brownian bridge with the conditioning \eqref{eq:bb-transfer}.  Then, by \cite[Theorem 4.12]{sy},
$$
\d W^{1^*}_s = \d W_s - 
\int_0^s
 \frac{K_T(T,s) K_T(T,u)}{\int_u^T K_T(T,v)^2\, \d v}
\, \d W_u\, \d s.
$$ 
Now, by integrating against the kernel $K_T$, we obtain from this that
$$
X^1_t = X_t -\int_0^T K_T(t,s)\int_0^s
 \frac{K_T(T,s) K_T(T,u)}{\int_u^T K_T(T,v)^2\, \d v}
\, \d W_u\, \d s. 
$$ 
This canonical-type bridge representation seems to be a new one.
\end{exa}

Let us then denote
$$
\la\!\la \g \ra\!\ra_{ij}(t) :=
\int_t^T g_i(s) g_j(s)\, \d s.
$$
Then, in the same was as Example \ref{exa:bb-canonical} the same way, by applying the transfer principle to \cite[Theorem 4.12]{sy}, we obtain the following canonical-type bridge representation for general Gaussian Fredholm processes.

\begin{pro}\label{pro:ct-bridges}
Let $X$ be a Gaussian process with Fredholm kernel $K_T$ such that $X_0=0$. Then the bridge $X^\g$ admits the canonical-type representation
$$
X^\g_t = X_t -
\int_0^T K_T(t,s) \int_0^s 
|\la\!\la\g^*\ra\!\ra|(s)\, (\g^*)^\top(s)\, \la\!\la \g^*\ra\!\ra^{-1}(s)\,
\frac{\g^*(u)}{\la\!\la\g^*\ra\!\ra(u)}
\, \d W_u\, \d s,
$$ 
where $\g^* = K_T^*\g$.
\end{pro}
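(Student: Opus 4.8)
The plan is to reduce the general Fredholm case to the Brownian case via the transfer principle, exactly as in Example~\ref{exa:bb-canonical} but now with $N$ simultaneous linear constraints. First I would note that by the transfer principle for Wiener integrals (Theorem~\ref{thm:transfer-principle-wi}) the conditioning $\int_0^T \g(t)\,\d X_t=\0$ is equivalent to the conditioning
$$
\int_0^T (K_T^*\g)(t)\,\d W_t = \int_0^T \g^*(t)\,\d W_t = \0
$$
on the underlying Brownian motion $W$, where $\g^* = K_T^*\g$. Thus a representation of $X^\g$ is obtained by taking the canonical representation $W^{\g^*}$ of the Brownian bridge conditioned on the $N$ functionals $\int_0^T g_i^*(t)\,\d W_t$, $i=1,\dots,N$, and then integrating it against the kernel $K_T$:
$$
X^\g_t = \int_0^T K_T(t,s)\,\d W^{\g^*}_s.
$$
This is legitimate because the Fredholm representation is linear and the regular conditional law is transported by the (a.s.\ measurable) map $W\mapsto \int_0^T K_T(\cdot,s)\,\d W_s$.

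The second step is to insert the known canonical-type representation of the multi-dimensional Brownian bridge. By \cite[Theorem 4.12]{sy} applied to the Brownian motion $W$ and the functions $\g^*$, one has
$$
\d W^{\g^*}_s = \d W_s - \left(\int_0^s |\la\!\la\g^*\ra\!\ra|(s)\,(\g^*)^\top(s)\,\la\!\la\g^*\ra\!\ra^{-1}(s)\,\frac{\g^*(u)}{\la\!\la\g^*\ra\!\ra(u)}\,\d W_u\right)\d s,
$$
where $\la\!\la\g^*\ra\!\ra_{ij}(t) = \int_t^T g_i^*(s)g_j^*(s)\,\d s$ is precisely the matrix obtained from $\g^*$ by the definition preceding the statement (note $\la\!\la\g^*\ra\!\ra(t)$ here plays the role of the remaining-covariance matrix of the Brownian functionals). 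Integrating this identity against $K_T(t,\cdot)$ and using $X_t = \int_0^T K_T(t,s)\,\d W_s$ yields
$$
X^\g_t = X_t - \int_0^T K_T(t,s)\int_0^s |\la\!\la\g^*\ra\!\ra|(s)\,(\g^*)^\top(s)\,\la\!\la\g^*\ra\!\ra^{-1}(s)\,\frac{\g^*(u)}{\la\!\la\g^*\ra\!\ra(u)}\,\d W_u\,\d s,
$$
which is the claimed formula.

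The one point that needs genuine care — and which I expect to be the main obstacle — is verifying that the hypotheses of \cite[Theorem 4.12]{sy} are met by the transferred functionals $\g^*$, in particular that the matrix $\la\!\la\g^*\ra\!\ra(s)$ is invertible for $s<T$ (equivalently, that the $g_i^*$ remain linearly independent on $[t,T]$ for each $t<T$) and that the resulting Volterra-type kernel is square-integrable so the stochastic integrals make sense. Linear independence of $\g$ on $[0,T]$ need not be preserved under $K_T^*$ in general, so one should either add a non-degeneracy hypothesis or argue that the conditioning is only on the nondegenerate part of $\la\!\la\g^*\ra\!\ra$; the matrix $\la\!\la\g^*\ra\!\ra(0)$ is exactly ${\lla\g\rra}$ from the orthogonal representation, so nondegeneracy of the conditioning is equivalent to invertibility of ${\lla\g\rra}$, and the monotonicity $\la\!\la\g^*\ra\!\ra(s)\preceq\la\!\la\g^*\ra\!\ra(0)$ together with strict positivity for $s<T$ is what must be checked. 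Once that is in place, the ``canonical-type'' claim — that $X^\g$ and the conditioned $X$ generate, in the transferred sense, the same information as $W^{\g^*}$ does relative to $W$ — is immediate from the corresponding statement in \cite{sy}, since integrating against $K_T$ does not enlarge filtrations in the Brownian level.
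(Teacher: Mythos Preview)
Your proposal is correct and follows essentially the same route as the paper: the paper's ``proof'' is simply the sentence preceding the proposition, which says to apply the transfer principle to \cite[Theorem 4.12]{sy} exactly as in Example~\ref{exa:bb-canonical}, i.e., rewrite the conditioning $\int_0^T\g\,\d X=\0$ as $\int_0^T\g^*\,\d W=\0$ via Theorem~\ref{thm:transfer-principle-wi}, take the canonical Brownian bridge $W^{\g^*}$ from \cite{sy}, and integrate it against $K_T(t,\cdot)$. Your cautionary remarks about the invertibility of $\la\!\la\g^*\ra\!\ra(s)$ and the preservation of linear independence under $K_T^*$ are well taken --- the paper does not address them either and is content to state the formula at the same level of rigor as the cited result.
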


%%%%%%%%%%%%%%%%%%%%%%%%%%%%%%%%%%%%%%%%%%%%%%%%%%%%%%%%%%%%%%%%%%%%%%%%%%%%%%%
\subsection{Series Expansions}\label{subs:se}               

The Mercer square root \eqref{eq:mercer-square-root} can be used to build the Karhunen--Lo\`eve expansion for the Gaussian process $X$.  But the Mercer form \eqref{eq:mercer-square-root} is seldom known.  However, if one can find some kernel $K_T$ such that the representation \eqref{eq:fredholm} holds, then one can construct a series expansion for $X$ by using the transfer principle of Theorem \ref{thm:transfer-principle-wi} as follows:

\begin{pro}[Series expansion]\label{pro:series-expansion}
Let $X$ be a separable Gaussian process with representation \eqref{eq:fredholm}. Let $(\phi^T_j)_{j=1}^\infty$ be any orthonormal basis on $L^2([0,T])$. Then $X$ admits the series expansion
\begin{equation}\label{eq:series-expansion}
X_t = \sum_{j=1}^\infty \int_0^T \phi^T_j(s)K_T(t,s)\, \d s\cdot \xi_j,
\end{equation}
where the $(\xi_j)_{j=1}^\infty$ is a sequence of independent standard normal random variables. The series \eqref{eq:series-expansion} converges in $L^2(\Omega)$; and also almost surely uniformly if and only if $X$ is continuous.
\end{pro}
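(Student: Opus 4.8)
The plan is to build the series expansion by transferring the classical Karhunen--Lo\`eve-type / Paley--Wiener expansion of the Brownian motion $W$ in \eqref{eq:fredholm} through the kernel $K_T$. First I would fix an arbitrary orthonormal basis $(\phi_j^T)_{j=1}^\infty$ of $L^2([0,T])$ and set $\xi_j := \int_0^T \phi_j^T(s)\,\d W_s = W(\phi_j^T)$. Since the $\phi_j^T$ are orthonormal in $L^2([0,T])$ and $W$ is a Brownian motion, the $(\xi_j)$ form an i.i.d.\ standard Gaussian sequence, and $\sum_j \phi_j^T(s)\xi_j$ is (formally) the expansion of white noise; equivalently $W_t = \sum_j \langle \1_t,\phi_j^T\rangle_{L^2}\,\xi_j$ in $L^2(\Omega)$. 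Applying the kernel, one expects
$$
X_t = \int_0^T K_T(t,s)\,\d W_s = \sum_{j=1}^\infty \left(\int_0^T \phi_j^T(s)K_T(t,s)\,\d s\right)\xi_j,
$$
which is precisely \eqref{eq:series-expansion}. To make this rigorous I would argue directly in $L^2(\Omega)$: for fixed $t$, the function $K_T(t,\cdot)$ lies in $L^2([0,T])$ (this is where $K_T\in L^2([0,T]^2)$ and Fubini are used), so it has the norm-convergent expansion $K_T(t,\cdot) = \sum_j c_j(t)\phi_j^T$ with $c_j(t) = \int_0^T \phi_j^T(s)K_T(t,s)\,\d s$, and the Wiener integral is an isometry, so $\int_0^T K_T(t,s)\,\d W_s = \sum_j c_j(t)\,W(\phi_j^T) = \sum_j c_j(t)\xi_j$ with convergence in $L^2(\Omega)$, with $\E\big(\sum_{j>n} c_j(t)\xi_j\big)^2 = \sum_{j>n} c_j(t)^2 \to 0$. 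This settles $L^2(\Omega)$ convergence for each $t$.

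For the almost sure uniform convergence statement, the ``only if'' direction is immediate: a uniformly convergent series of continuous functions (each partial sum $\sum_{j=1}^n c_j(t)\xi_j$ is continuous in $t$, since $c_j$ is continuous — or at least one can choose a continuous version) has a continuous limit, so if the convergence is a.s.\ uniform then $X$ has continuous paths. The substantive direction is ``if $X$ is continuous then the series converges a.s.\ uniformly''. Here I would invoke the classical Itô--Nisio theorem: $(\xi_j)$ is a symmetric i.i.d.\ sequence, the partial sums $S_n(t) = \sum_{j=1}^n c_j(t)\xi_j$ are continuous-path processes converging in $L^2(\Omega)$ (hence in probability, hence in distribution in the appropriate sense) to $X_t$ for each $t$, and $X$ has a continuous version; the Itô--Nisio theorem then upgrades the convergence of the partial sums to almost sure convergence in the sup-norm on $C([0,T])$. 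One should check the hypotheses of Itô--Nisio carefully: it applies to sums of independent symmetric Banach-space-valued random variables, and one needs the summands $c_j(\cdot)\xi_j$ to be genuinely $C([0,T])$-valued and the limit to live in $C([0,T])$, which is exactly the continuity assumption on $X$.

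The main obstacle I anticipate is the a.s.-uniform convergence equivalence, specifically making the application of Itô--Nisio airtight: one must ensure the partial-sum processes are honestly valued in the Banach space $C([0,T])$ (continuity of each $c_j$, which follows from continuity of $R$ hence of $K_T(t,\cdot)$ in $L^2$, or can be sidestepped by working with a separable modification) and that one has identified the limit in distribution on $C([0,T])$ with the continuous version of $X$, not merely finite-dimensional convergence. Everything else — the construction of $(\xi_j)$, their independence and standard-normality, and the $L^2(\Omega)$ convergence — is a routine consequence of the Wiener-integral isometry (Definition~\ref{dfn:wi}) together with $K_T\in L^2([0,T]^2)$, and needs no appeal to the transfer principle beyond the representation \eqref{eq:fredholm} itself.
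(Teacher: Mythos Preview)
Your argument is correct. The paper's proof is extremely terse: it observes that the reproducing kernel Hilbert space (RKHS) of $X$ equals $K_T L^2([0,T])$ with $K_T$ an isometry from $L^2([0,T])$ onto it, and then cites two results from Adler's monograph --- one giving the $L^2(\Omega)$ expansion from any orthonormal basis of the RKHS, and one establishing the equivalence between continuity of $X$ and almost sure uniform convergence of such expansions.

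Your route is the same argument unpacked. The Wiener-isometry computation you give for $L^2(\Omega)$ convergence is exactly what underlies the RKHS statement: identifying the RKHS of $X$ with $K_T L^2([0,T])$ is the same as saying $K_T(t,\cdot)\mapsto X_t$ extends to an isometry, which is the Fredholm/Wiener isometry you use. For the almost sure uniform part, Adler's theorem is itself proved via It\^o--Nisio-type arguments, so your direct invocation of It\^o--Nisio is essentially reproducing that proof rather than citing it. Your observation that continuity of $X$ forces $t\mapsto K_T(t,\cdot)$ to be $L^2$-continuous (via $\|K_T(t,\cdot)-K_T(t',\cdot)\|_{L^2}^2 = R(t,t)-2R(t,t')+R(t',t')$), hence each $c_j$ continuous and each summand $C([0,T])$-valued, is precisely the technical point one needs to make It\^o--Nisio apply, and it is correct. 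What the paper's approach buys is brevity and a pointer to a standard reference; what yours buys is self-containment and an explicit identification of the one nontrivial step.
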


The proof below uses reproducing kernel Hilbert space technique.  For more details on this we refer to \cite{gilsing-sottinen} where the series expansion is constructed for fractional Brownian motion by using the transfer principle.

\begin{proof}
The Fredholm representation \eqref{eq:fredholm} implies immediately that the reproducing kernel Hilbert space of $X$ is the image $K_TL^2([0,T])$ and $K_T$ is actually an isometry from $L^2([0,T])$ to the reproducing kernel Hilbert space of $X$. The $L^2$-expansion \eqref{eq:series-expansion} follows from this due to \cite[Theorem 3.7]{adler} and the equivalence of almost sure convergence of \eqref{eq:series-expansion} and continuity of $X$ follows \cite[Theorem 3.8]{adler}.
\end{proof}

%%%%%%%%%%%%%%%%%%%%%%%%%%%%%%%%%%%%%%%%%%%%%%%%%%%%%%%%%%%%%%%%%%%%%%%%%%%%%%%
\subsection{Stochastic differential equations and maximum likelihood estimators}

Let us briefly discuss the following generalized Langevin equation
\begin{equation*}
\d X^\theta_t = -\theta X^\theta_t \d t + \d X_t, \quad t\in[0,T]
\end{equation*}
with some Gaussian noise $X$, parameter $\theta>0$, and initial condition $X_0$. This can be written in the integral form
\begin{equation}
\label{eq:SDE_langevin_integral}
X^\theta_t = X^\theta_0 -\theta \int_0^t X^\theta_s \,\d s+ \int_0^T \1_t(s)\, \d X_s.
\end{equation}
Here the integral $\int_0^T \1_t(s)\,\d X_s$ can be understood in a pathwise sense or in a Skorohod sense, and both integrals coincides. Suppose now that the Gaussian noise $X$ has the Fredholm representation
$$
X_t = \int_0^T K_T(t,s)\,\d W_s.
$$
By applying the transfer principle we can write \eqref{eq:SDE_langevin_integral} as
$$
X^\theta_t = X^\theta_0 - \theta \int_0^t X^\theta_s \,\d t + \int_0^T K_T(t,s)\,\d W_s.
$$
This equation can be interpreted as a stochastic differential equation with some anticipating Gaussian perturbation term $\int_0^T K_T(t,s)\,\d W_s$. Now the unique solution to \eqref{eq:SDE_langevin_integral} with an initial condition $X^\theta_0=0$ is given by
$$
X^\theta_t = \e^{-\theta t}\int_0^t \e^{\theta s}\d X_s.
$$
By using integration by parts and by applying the Fredholm representation of $X$ this can be written as 
$$
X^\theta_t = \int_0^T K_T(t,u)\,\d W_u - \theta \int_0^t \int_0^T \e^{-\theta t}\e^{\theta s}K_T(s,u)\,\d W_u \d s
$$
which, thanks to Stochastic Fubini's theorem, can be written as
$$
X^\theta_t = \int_0^T \left[K_T(t,u) - \theta \int_0^t \e^{-\theta t}\e^{\theta s}K_T(s,u)\, \d s\right]\d W_u.
$$
In other words, the solution $X^\theta$ is a Gaussian process with a Kernel
$$
K^\theta_T(t,u) = K_T(t,u) - \theta \int_0^t e^{-\theta (t-s)}K_T(s,u)\,\d s.
$$
Note that this is just an example how transfer principle can be applied in order to study stochastic differential equations. Indeed, for a more general equation
$$
\d X^a_t = a(t,X^a_t)\d t + \d X_t
$$
the existence or uniqueness result transfers immediately to the existence or uniqueness result of equation
$$
X^a_t = X^a_0 + \int_0^t a(s,X^a_s)\,\d s + \int_0^T K_T(t,u)\,\d W_u,
$$
and vice versa. 

Let us end this section by discussing briefly how the transfer principle can be used to build maximum likelihood estimators (MLE's) for the mean-reversal-parameter $\theta$ in equation \eqref{eq:SDE_langevin_integral}. For details on parameter estimation in such equations with general stationary-increment Gaussian noise we refer to \cite{sot-vii-langevin} and references therein. Let us assume that the noise $X$ in \eqref{eq:SDE_langevin_integral} is infinite-generate, in the sense that the Brownian motion $W$ in its Fredholm representation is a linear transformation of $X$.  Assume further that the transformation admits a kernel so that we can write
$$
W_t = \int_0^T K^{-1}_T(t,s)\, \d X_s.
$$
Then, by operating with the kernels $K_T$ and $K_T^{-1}$, we see that the equation \eqref{eq:SDE_langevin_integral} is equivalent to the anticipating equation
\begin{equation}\label{eq:anticipating-girsanov}
W^\theta_t = A_{T,t}(W^\theta) + W_t,
\end{equation} 
where
$$
A_{T,t}(W^\theta)
= - \theta\int_0^T\int_0^T K^{-1}_T(t,s)K_T(s,u)\, \d W^\theta_u \,\d s.
$$
Consequently, the MLE for the equation \eqref{eq:SDE_langevin_integral} is the MLE for the equation \eqref{eq:anticipating-girsanov}, which in turn can be constructed by using a suitable anticipating Girsanov theorem. There is a vast literature on how to do this, see e.g., \cite{Bishwal-2010} and references therein.

%%%%%%%%%%%%%%%%%%%%%%%%%%%%%%%%%%%%%%%%%%%%%%%%%%%%%%%%%%%%%%%%%%%%%%%%%%%%%%%
\section{Conclusions}

We have shown that virtually every Gaussian process admits a Fredholm representation with respect to a Brownian motion.  This apparently a simple fact has, as far a we know, remained unnoticed until now.  The Fredholm representation immediately yields the transfer principle with allows one to transfer the stochastic analysis of virtually any Gaussian process into stochastic analysis of the Brownian motion.  We have show how this can be done.  Finally, we have illustrated the power of the Fredholm representation and the associated transfer principle in many applications.

Stochastic analysis becomes easy with the Fredholm representation.  The only obvious problem is to construct the Fredholm kernel from the covariance function.  In principle this can be done algorithmically, but analytically it is very difficult.  The opposite construction is, however, trivial.  Therefore, if one begins the modeling with the Fredholm kernel and not with the covariance, one's analysis will be simpler and much more convenient. 

%%%%%%%%%%%%%%%%%%%%%%%%%%%%%%%%%%%%%%%%%%%%%%%%%%%%%%%%%%%%%%%%%%%%%%%%%%%%%%%
\bibliographystyle{siam}
\bibliography{bibli}

\begin{thebibliography}{10}

\bibitem{adler}
{\sc R.~Adler}, {\em An Introduction to Continuity, Extrema, and Related Topics
  for General {G}aussian Processes}, vol.~12 of Institute of Mathematical
  Statistics Lecture Notes--Monograph series, Institute of Mathematical
  Statistics, Hayward, CA, 1990.

\bibitem{alos-mazet-nualart}
{\sc E.~Al\`os, O.~Mazet, and D.~Nualart}, {\em Stochastic calculus with
  respect to {G}aussian processes}, The Annals of Probability, 29 (2001),
  pp.~766--801.

\bibitem{azmoodeh-et-al}
{\sc E.~Azmoodeh, T.~Sottinen, L.~Viitasaari, and A.~Yazigi}, {\em Necessary
  and sufficient conditions for {H}\"older continuity of {G}aussian processes},
  Statist. Probab. Lett., 94 (2014), pp.~230--235.

\bibitem{bender-elliott}
{\sc C.~Bender and R.~Elliott}, {\em On the {C}lark--{O}cone theorem for
  fractional {B}rownian motions with {H}urst parameter bigger than a half},
  Stoch. Stoch. Rep., 75 (2003), pp.~391--405.

\bibitem{oksendal}
{\sc F.~Biagini, Y.~Hu, B.~{\O}ksendal, and T.~Zhang}, {\em Stochastic Calculus
  for Fractional {B}rownian Motion and Applications}, Springer, 2008.

\bibitem{Bishwal-2010}
{\sc J.~P.~N. Bishwal}, {\em Maximum likelihood estimation in {S}korohod
  stochastic differential equations}, Proc. Amer. Math. Soc., 138 (2010),
  pp.~1471--1478.

\bibitem{che-nua}
{\sc P.~Cheridito and D.~Nualart}, {\em Stochastic integral of divergence type
  with respect to the fractional {B}rownian motion with {H}urst parameter
  $h<\frac12$}, Ann. Institut Henri Poincar\'e, 41 (2005), pp.~1049--1081.

\bibitem{cramer}
{\sc H.~Cram\'er}, {\em On the structure of purely non-deterministic
  processes}, Ark. Mat., 4 (1961), pp.~249--266.

\bibitem{dasgupta-kallianpur2}
{\sc A.~Dasgupta and G.~Kallianpur}, {\em Chaos decomposition of multiple
  fractional integrals and applications}, Probability Theory and Related
  Fields, 115 (1999), pp.~527--548.

\bibitem{dasgupta-kallianpur1}
\leavevmode\vrule height 2pt depth -1.6pt width 23pt, {\em Multiple fractional
  integrals}, Probability Theory and Related Fields, 115 (1999), pp.~505--525.

\bibitem{baudoin-nualart}
{\sc B.~F. and N.~D.}, {\em Equivalence of {V}olterra processes}, Stochastic
  processes and their applications, 107 (2003), pp.~327--350.

\bibitem{gilsing-sottinen}
{\sc H.~Gilsing and T.~Sottinen}, {\em Power series expansions for fractional
  {B}rownian motions}, Theory of Stochastic Processes, 9 (2003), pp.~38--49.

\bibitem{hida-hitsuda}
{\sc T.~Hida and M.~Hitsuda}, {\em {G}aussian Processes}, vol.~120,
  Translations of Mathematical Monographs, AMS, Providence, 1993.

\bibitem{huang-cambanis}
{\sc S.~Huang and S.~Cambanis}, {\em Stochastic and multiple {W}iener integrals
  for {G}aussian processes}, The Annals of Probability, 6 (1978), pp.~585--614.

\bibitem{ito}
{\sc K.~Ito}, {\em Multiple {W}iener integral}, J. Math. Soc. Japan, 3 (1951),
  pp.~157--169.

\bibitem{kruk-russo}
{\sc I.~Kruk and F.~Russo}, {\em {M}alliavin-{S}korohod calculus and
  {P}aley-{W}iener integral for covariance singular processes}, arxiv:
  1011.6478,  (2010).

\bibitem{kruk-et-al}
{\sc I.~Kruk, F.~Russo, and C.~Tudor}, {\em Wiener integrals, {M}alliavin
  calculus and covariance structure measure}, Journal of Functional Analysis,
  249 (2007), pp.~92--142.

\bibitem{lebovits}
{\sc J.~Lebovits}, {\em Stochastic calculus with respect to {G}aussian
  processes}, arxiv: 1408.1020,  (2014).

\bibitem{lei-nualart}
{\sc P.~Lei and D.~Nualart}, {\em Stochastic calculus for {G}aussian processes
  and applications to hitting times}, Communications in Stochastic Analysis,
  6(3) (2012), pp.~379--402.

\bibitem{mishura}
{\sc Y.~Mishura}, {\em Stochastic calculus for fractional {B}rownian motion and
  related processes}, vol.~1929 of Lecture Notes in Mathematics,
  Springer-Verlag, Berlin, 2008.

\bibitem{mo-viens}
{\sc O.~Mocioalca and F.~Viens}, {\em {S}korohod integration and stochastic
  calculus beyond the fractional {B}rownian scale}, Journal of Functional
  Analysis, 222 (2005), pp.~385--434.

\bibitem{pec-nourd}
{\sc I.~Nourdin and G.~Peccati}, {\em Stein's method and exact {B}erry-{E}sseen
  asymptotics for functionals of {G}aussian fields}, The Annals of Probability,
  37 (2009), pp.~2231--2261.

\bibitem{nualart}
{\sc D.~Nualart}, {\em The Malliavin Calculus and Related Topics}, Probability
  and Its Applications, Springer, 2006.

\bibitem{perezabreu-tudor}
{\sc V.~Perez-Abreu and C.~Tudor}, {\em Multiple stochastic fractional
  integrals: a transfer principle for multiple stochastic fractional
  integrals}, Bol. Soc. Mat. Mexicana, 8 (2002), pp.~187--203.

\bibitem{pipiras-taqqu}
{\sc V.~Pipiras and M.~Taqqu}, {\em Are classes of deterministic integrands for
  fractional {B}rownian motion on an interval complete?}, Bernoulli, 7 (2001),
  pp.~873--879.

\bibitem{sottinen}
{\sc T.~Sottinen}, {\em On {G}aussian processes equivalent in law to fractional
  {B}rownian motion}, J. Theoret. Probab., 17 (2004), pp.~309--325.

\bibitem{sot-vii-langevin}
{\sc T.~Sottinen and L.~Viitasaari}, {\em Parameter estimation for the
  {L}angevin equation with stationary-increment {G}aussian noise}, arxiv:
  1603.00390,  (2016).

\bibitem{sy}
{\sc T.~Sottinen and A.~Yazigi}, {\em Generalized {G}aussian bridges}, Stoch.
  Process. Appl., 124 (2014), pp.~3084--3105.

\end{thebibliography}
\end{document}